\documentclass[10 pt]{article}

\usepackage[utf8]{inputenc}

\usepackage{amssymb}
\usepackage{amsthm}
\usepackage{amsmath}
\usepackage{anysize}
\usepackage{url}
\usepackage{color}
\usepackage{graphicx}
\usepackage{overpic}
\usepackage{subcaption}
\usepackage{xcolor}

\usepackage{bm}

\usepackage{a4wide,url}

\usepackage[colorlinks]{hyperref}

\usepackage{paralist}

\usepackage{comment}

%%%%%%%%%%%macros%%%%%%%%%%%%%

\newcommand{\abs}[1]{\left| #1 \right|}

\newcommand{\norm}[1]{\left\lVert#1\right\rVert}

\newcommand{\N}{\mathbb{N}}

\newcommand{\R}{\mathbb{R}}

\newcommand{\C}{\mathbb{C}}

\newcommand{\Cd}{\mathbb{C}^d}

\newcommand{\Oab}{\mathcal{O}_\alpha^\beta}

\newcommand{\poincarec}{C_{P}}

\newcommand{\RN}[1]{\uppercase\expandafter{\romannumeral #1\relax}}

\graphicspath{{pics/}}

\newtheorem{theorem}{Theorem}
\newtheorem{remark}[theorem]{Remark}

\newtheorem{definition}[theorem]{Definition}
\newtheorem{proposition}[theorem]{Proposition}
\newtheorem{lemma}[theorem]{Lemma}
\newtheorem{corollary}[theorem]{Corollary}

\newenvironment{customthm}[1]
  {\innercustomthm}
  {\endinnercustomthm}
\numberwithin{theorem}{section}
\numberwithin{equation}{section}

\date{}

\begin{document}
\title{Stable Gabor phase retrieval for multivariate functions}

\author{Philipp Grohs\footnote{Faculty of Mathematics, University of Vienna, Oskar Morgenstern Platz 1, 1090 Vienna, Austria and Research Platform DataScience@UniVie, University of Vienna, Oskar Morgenstern Platz 1, 1090 Vienna, Austria} and Martin Rathmair\footnote{Faculty of Mathematics, University of Vienna, Oskar Morgenstern Platz 1, 1090 Vienna, Austria}}

\maketitle
\abstract{In recent work [P. Grohs and M. Rathmair. Stable Gabor Phase Retrieval and Spectral Clustering. Communications on Pure and Applied Mathematics (2018)] the instabilities 
of the Gabor phase retrieval problem, i.e., the problem of reconstructing a function $f$ from its spectrogram $|\mathcal{G}f|$, where
$$
\mathcal{G}f(x,y)=\int_{\R^d} f(t) e^{-\pi|t-x|^2} e^{-2\pi i t\cdot y} dt, \quad x,y\in \R^d,
$$
have been completely classified in terms of the disconnectedness of the spectrogram. 
These findings, however, were crucially restricted to the onedimensional case ($d=1$) and therefore not relevant for many practical applications.\\
In the present paper we not only generalize the aforementioned results to the multivariate case but also significantly improve on them.
Our new results have comprehensive implications in various applications such as ptychography, a highly popular method in coherent diffraction imaging.
}

\section{Introduction}
\subsection{Motivation}\label{sec:motivation}
\emph{Phase retrieval} in its most general formulation is concerned with the reconstruction of a signal $f\in \mathcal{B}$ with $\mathcal{B}$ a Banach space from phaseless linear measurements
\begin{equation}\label{eq:phaselessgeneral}
\mathcal{A}f:=\left(|\phi_\omega(f)|\right)_{\omega\in \Omega},
\end{equation}
where $\Phi=(\phi_\omega)_{\omega\in \Omega} \subset \mathcal{B}'$, the dual of $\mathcal{B}$.\\
Problems of this kind appear in a vast number of physical applications. 
The most prominent example being coherent diffraction imaging\cite{Miao2015530,shechtman2015phase}, where one seeks to recover a function from phaseless Fourier type measurements, so called diffraction patterns.
Further applications include radar\cite{jaming1999phase}, astronomy\cite{dainty1987phase}, audio\cite{waldspurger2015wavelet} and quantum mechanics\cite{paul2008introduction} to mention only a few.\\
Usually the measurement vectors $\Phi$ are such that 
$
f\mapsto \left(\phi_\omega(f)\right)_{\omega\in \Omega}
$
is nicely invertible, meaning that reconstructing $f$ would not be a significant problem if the phases of the measurements were available.
The removal of phase, however, not only involves the loss of a huge amount of information but also renders the problem nonlinear.
It is therefore notoriously difficult to even decide whether a concrete phase retrieval problem is well posed, 
i.e., whether the measurements $\mathcal{A}f$ uniquely and stabily determine the underlying signal $f$ in the following sense:\\ 
{\bf Uniqueness:} Is the mapping $f\mapsto \mathcal{A}f$ injective up to the identification $f\sim e^{i\alpha}f$ for $\alpha\in \R$? \\
{\bf Stability:} What is the qualitative behaviour of the local stability constant, i.e., the smallest number $c(f)$ such that
\begin{equation}\label{eq:stabilitygeneral}
d_{\mathcal{B}}(g,f):= \inf_{|a|=1} \|g-a f\|_{\mathcal{B}} \le c(f) d'(\mathcal{A}g,\mathcal{A}f) \quad \forall g\in \mathcal{B},
\end{equation}
where $d'$ denotes a suitable metric on the measurement space?\\
At this point we would like to draw the attention to our very recent article together with 
Sarah Koppensteiner\cite{grohs2019mathpr} where, among other things, our current understanding of uniqueness and stability for phase retrieval is summarized.\\

The present paper is concerned with the study of stability when the measurements arise from the so called \emph{Gabor} transform.
The Gabor transform is just the short time Fourier transform with Gaussian window.
\begin{definition}\label{def:gabortrafo}
 The Gabor transform of $f\in L^2(\R^d)$ is defined by 
 \begin{equation*}
  \mathcal{G}f(x,y)=\int_{\R^d} f(t) e^{-\pi|t-x|^2} e^{-2\pi i t\cdot y} dt, \quad x, y \in \R^d.
 \end{equation*} 
 By duality the definition can be extended to the dual of the space of Schwartz functions, i.e., the space of tempered distributions denoted by $\mathcal{S}'(\R^d)$.
\end{definition}
\begin{figure}
\centering
 \includegraphics[width=0.6\textwidth]{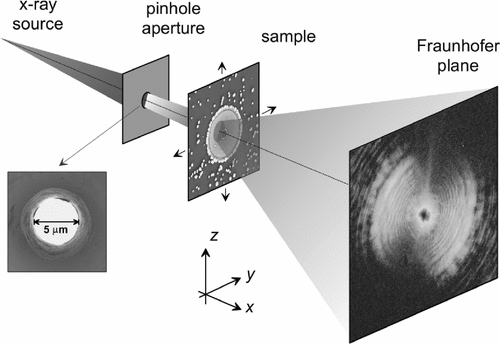}
 \caption{Schematic setup of ptychographical experiment. Image taken from \cite{rodenburg2007hard}.}
 \label{fig:ptycho}
\end{figure}
Note that by choosing the measurement vectors to be time-frequency shifts of the Gaussian
$$
\phi_\omega(t)= e^{-\pi|t-x|^2}e^{-2\pi i y\cdot t}, \quad \omega=(x,y)\in \Omega\subset\R^{2d}
$$
the Gabor transform fits right into our setting, i.e., in that case $\mathcal{A}f$ as defined in Equation \eqref{eq:phaselessgeneral} coincides with $|\mathcal{G}f|$.\\
The Gabor transform can be interpreted as localization of $f$ at $x$ followed by Fourier transform
$$
\mathcal{G}f(x,y)=\mathcal{F}\left(fe^{-\pi |\cdot - x|^2}\right)(y).
$$
Thus, for a two dimensional object, represented by $f\in L^2(\R^2)$, the magnitude of the Gabor transform $|\mathcal{G}f(x,\cdot)|$ describes the diffraction pattern of 
the localization of $f$ at $x$.
Hence the Gabor transform perfectly mimics the concept of \emph{ptychography}, a popular and highly successful approach in coherent diffraction imaging based on the idea that multiple diffraction patterns of
one and the same object are generated by illuminating different sections of the object seperately in order to introduce redundancy, cf. Figure \ref{fig:ptycho}.\\
%
\begin{comment}
More precisely the focus lies on studying the local stability constant as defined next.
\begin{definition}\label{def:locstabconst}
 Let $\Omega\subset \R^{2d}$ and let $\|.\|_{\Omega}$ be a norm 
 \footnote{For technical reasons we allow the norm here to attain values in $[0,+\infty]$.}
 on the space of smooth functions on $\Omega$.
 Then the local stability constant $c(f)$ of $f\in \mathcal{M}^p(\R^d)$ is defined as the smallest number $c>0$ such that
 \begin{equation*}
  \inf_{|a|=1} \|\mathcal{G}g-a\mathcal{G}f\|_{L^p(\Omega)} \le c \||\mathcal{G}g|-|\mathcal{G}f|\|_{\Omega}, \quad \forall g\in \mathcal{M}^p(\R^d).
 \end{equation*}
Note that $c(f)$ depends not only on $f$ but also on $\Omega$ and $\|\cdot\|_{\Omega}$. 
\end{definition}
\end{comment}
%
\subsection{Related work}
We will now briefly discuss results regarding stability properties of phase retrieval in infinite dimensional spaces.
All results into this direction are fairly recent without exception.\\
First of all, inconveniently, phase retrieval in infinite dimensions is severely ill-posed as it can never be uniformly stable, in the sense that 
$c(f)$ in \eqref{eq:stabilitygeneral} can never be uniformly bounded, i.e., $\sup_{f\in \mathcal{B}} c(f)=+\infty$, under very general assumptions on $\mathcal{B}, \Phi$ and $d'$ \cite{cahill2016phase,grohsstab}.
This means that there are functions $f$ and $\tilde{f}$ such that the respective measurements $\mathcal{A}f$ and $\mathcal{A}\tilde{f}$ are arbitrarily close while $f$ and $\tilde{f}$ are not similar at all.
In that case $f$ is informally refered to as an 'instability'.\\
Note that this behaviour stands in stark contrast to the finite dimensional situation where uniqueness readily implies global stability.\\
Furthermore, if the infinite dimensional space $\mathcal{B}$ is approximated by an increasing sequence $\mathcal{B}_1\subset \mathcal{B}_2 \subset \ldots\subset \mathcal{B}$ 
of finite dimensional subspaces with $\dim(\mathcal{B}_n)=n$ then the global stability constant of the restricted problem, i.e., the smallest number $c_n$ such that
\begin{equation*}
\inf_{|a|=1} \|g-a f\|_{\mathcal{B}} \le c_n d'(\mathcal{A}g,\mathcal{A}f) \quad \forall f,g\in \mathcal{B}_n,
\end{equation*}
may degenerate exponentially in $n$ \cite{cahill2016phase,grohsstab}.\\

For the concrete example of Gabor phase retrieval explicit instabilities can be constructed by taking two functions $f_1$ and $f_2$ which have time-frequency support on two disjoint domains, meaning that
$\mathcal{G}f_1$ and $\mathcal{G}f_2$ are essentially supported on disjoint domains.
In that case the spectrograms of $f_+:=f_1+f_2$ and $f_-:=f_1-f_2$ approximately coincide since
$$
|\mathcal{G}(f_1\pm f_2)|^2 = |\mathcal{G}f_1|^2 \pm 2 \Re \left(\mathcal{G}f_1 \overline{\mathcal{G}f_2}\right) + |\mathcal{G}f_2|^2 \approx |\mathcal{G}f_1|^2 + |\mathcal{G}f_2|^2.
$$
For details see \cite{aifarigrohsinstab}.
Qualitatively all instabilities obtained in this way are of the same type, namely their spectrograms essentially live on a domain which is a disconnected set in the time-frequency plane.
A quantitative concept that precisely captures this kind of disconnectedness is provided by the so called Cheeger constant, which plays a prominent role in Riemannian geometry \cite{cheeger} 
and spectral graph theory \cite{Chung:1997}.
\begin{definition}\label{def:cheegerconst}
Let $\Omega\subset \R^d$ be a domain and let $w$ be a nonnegative, continuous function on $\Omega$.
Then the \emph{Cheeger constant} of $w$ is defined by 
$$
h(w,\Omega):=\inf_{C \in\mathcal{C}} \frac{\int_{\partial C \cap \Omega} w}{\min\{\int_C w, \int_{\Omega\setminus C} w\}},
$$
where $\mathcal{C}:=\{C\subset \Omega \text{ open}:~ \partial C \cap \Omega \text{ is smooth}\}$.
\end{definition}
\begin{remark}\label{rem:cheegerconst}
 A small Cheeger constant $h(w,\Omega)$ indicates that the domain $\Omega$ can be partitioned into $C\subset \Omega$ and $\Omega\setminus C$ such that the weight $w$ is small along the seperating boundary and, 
 at the same time, that both $C$ and $\Omega\setminus C$ approximately carry the same amount of $L^1$-energy w.r.t. $w$.
 In that sense $w$ then consists of multiple components; we say that $w$ is of the diconnected type.\\
 If on the other hand $w$ is concentrated on a connected domain -- the Gaussian being a prime example -- a partition which accomplishes both objectives simultaneously does not exist. 
 See Figure \ref{fig:disconnect} where two concrete examples are considered.
\end{remark}
\begin{figure}
  \includegraphics[width=0.33\textwidth]{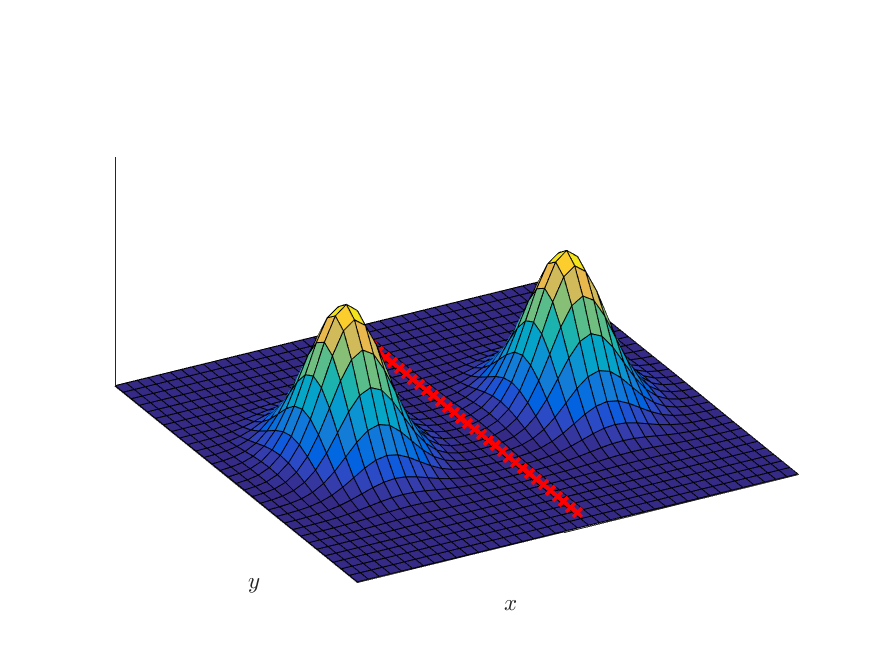}
  \includegraphics[width=0.33\textwidth]{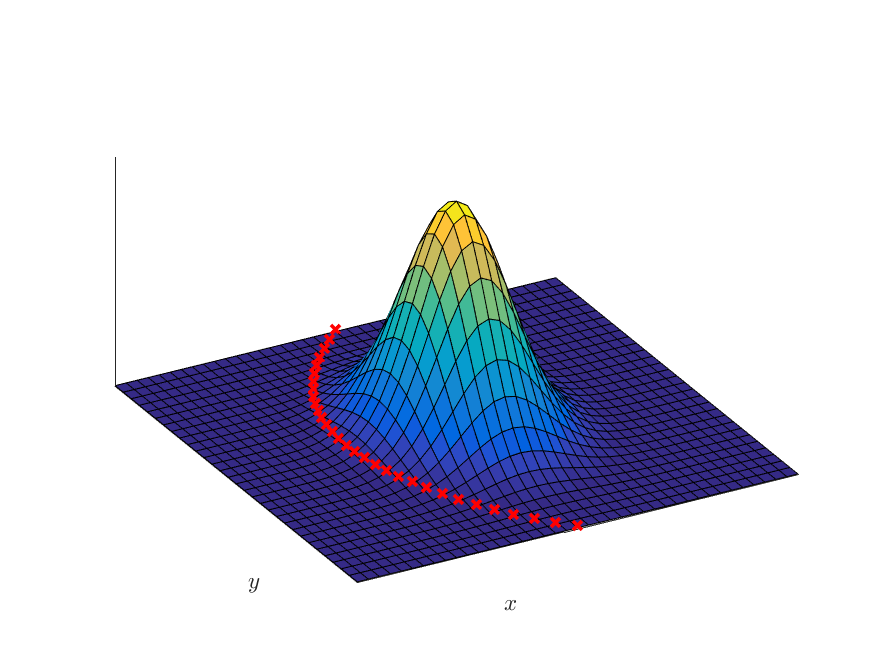}
  \includegraphics[width=0.33\textwidth]{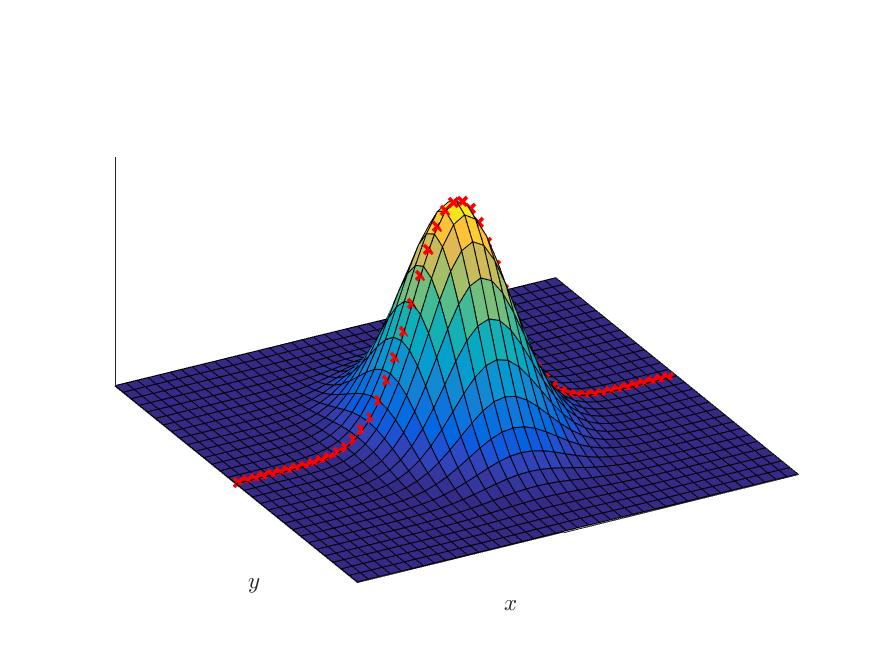}
  \caption{Comparison of possible partitions of the domain in the disconnected(left) and connected case(center and right).}
  \label{fig:disconnect}
\end{figure}
Based upon the work of one of the authors and his collaborators\cite{alaifari2016stable}, the paramount discovery in our preceding article\cite{stablegaborpr} is that 
the local stability constant $c(f)$ for phase retrieval from Gabor magnitudes of univariate functions 
can be essentially controlled by the reciprocal of the Cheeger constant of the spectrogram of $f$, i.e. by $h(|\mathcal{G}f|,\Omega)^{-1}$. 
This insight nicely complements the picture as it reveals that all instabilities are of the disconnected type.
However the results are fundamentally restricted to the onedimensional setting.
\subsection{Contribution}
The main contribution of this article is that we establish the connection between Cheeger constant and stability of Gabor phase retrieval obtained in \cite{stablegaborpr}
for multivariate signals of arbitrary dimension.
The function spaces best suited for our analysis are the so called \emph{modulation spaces}.
\begin{definition}
For $p\ge 1$ the modulation spaces are defined by
\begin{equation}\label{eq:defmodspace}
 \mathcal{M}^p(\R^d)=\left\{f\in \mathcal{S}'(\R^d):~\mathcal{G}f\in L^p(\R^{2d})\right\},
\end{equation}
with induced norm $\|f\|_{\mathcal{M}^p(\R^d)}=\|\mathcal{G}f\|_{L^p(\R^{2d})}$.
\end{definition}
With the modulation spaces at hand we can now state a special case of our main result, Theorem \ref{thm:gaborstability}.
\begin{customthm}{A}\label{thm:stabapp}
Suppose that $f\in\mathcal{M}^1(\R^d)$ is such that $|\mathcal{G}f|$ has a global maximum at the origin and let $q>2d$.
Then for all $g\in \mathcal{M}^1(\R^d)$ it holds that
\begin{multline}\label{eq:stabestapp}
\inf_{|a|=1} \|g-af\|_{\mathcal{M}^1(\R^d)} \lesssim (1+h(|\mathcal{G}f|,\R^{2d})^{-1}) \\
\cdot \left(
\||\mathcal{G}g|-|\mathcal{G}f|\|_{W^{1,1}(\R^{2d})} + \|(1+|\cdot|^{2d+2})\left(|\mathcal{G}g|-|\mathcal{G}f|\right)\|_{L^q(\R^{2d})}
\right)
\end{multline}
where the implicit constant depends on $d$ and $q$ only.
\end{customthm}
Note however that Theorem \ref{thm:gaborstability} is way more general as it also covers the case 
where the phase of the Gabor transform on a domain $\Omega\subsetneq\R^{2d}$ is to be reconstructed given $|\mathcal{G}f(\omega)|$, $\omega\in \Omega$.\\
Our results have an immediate impact for substantial applications.
One of them being ptychography -- as briefly discussed in Section \ref{sec:motivation} -- where the object of interest is represented by a function of more than one variable.
Theorem \ref{thm:stabapp} identifies precisely for which ptychographic measurements reconstruction is possible in a stable manner.\\
We would like to stress that the results in the present paper are not merely a straight forward generalization of our results from \cite{stablegaborpr} to higher dimensions.
The proof methods have undergone several modifications which not only makes for a slicker reading but also leads to notably improved results: 
Our earlier analysis only guaranteed estimates as in \eqref{eq:stabestapp} where the implicit constant mildly depended on $f$.
This dependency is now entirely removed, i.e., the stability constant can indeed be controlled in terms of the reciprocal of the Cheeger constant.\\
Our proof methods draw upon techniques from various fields of mathematics such as functional analysis, Riemannian geometry, complex analysis in several variables and potential theory.  
The second main emphasis lies on the study of certain quantities, such as the logarithmic derivative, of entire functions satisfying specific growth restrictions.
The results we derive into this direction play a vital role in our analysis of Gabor phase retrieval.
These results do not only serve as an auxiliary intermediate step but are rather interesting in their own right, and therefore merit to be highlighted at this stage:
\begin{customthm}{B}\label{thm:estlogderiv}
 Suppose that $G$ is an entire function on $\C^d$ such that $\sup_{|z|\le r} |G(z)|\le |G(0)| e^{\alpha r^\beta}$ for all $r>0$.
 Then it holds that
 $$
 \|G'/G\|_{L^1(B_r)} \lesssim r^{2d+\beta-1}, \quad r>0,
 $$
 where the implicit constant depends on $d,\alpha$ and $\beta$ but not on $G$.
\end{customthm}
Theorem B is a special case of Theorem \ref{thm:main}.

\subsection{Preliminaries and Notation}
In the present paper we will constantly identify $\Cd$ with $\R^{2d}$ via 
$$
(z_1,\ldots,z_d)=(x_1+iy_1,\ldots,x_d+iy_d)\quad  \leftrightarrow \quad (x_1,y_1,\ldots,x_d,y_d);
$$
accordingly, a domain $\Omega$ in $\Cd$ can be considered as a domain in $\R^{2d}$ and vice versa.
We will denote balls of radius $r$ centered at $u$ by
$B_r(u):=\{z: |z-u|<r\}$; if $u=0$ we will just write $B_r$.\\
A complex valued function $F$ on a domain $\Omega\subset \Cd$ is differentiable at $u\in \Omega$ if it is differentiable w.r.t. to $x_1,y_1,\ldots,x_d,y_d$.
In that case we write 
$$
\nabla F(u)=\left(\frac{\partial}{\partial x_1} F(u), \frac{\partial}{\partial y_1} F(u),\ldots, \frac{\partial}{\partial x_d} F(u),\frac{\partial}{\partial y_d} F(u)\right)^T.
$$
We will also use the so called \emph{Wirtinger derivatives} defined by
$\frac{\partial}{\partial z_j} = \frac12 \left(\frac{\partial}{\partial x_j} - i \frac{\partial}{\partial y_j} \right)$ for $1\le j \le d$.
If $F$ is complex differentiable at $u$ we will occasionally use the notation
$$
F'(u)=\left(\frac{\partial}{\partial z_1} F(u),\ldots,\frac{\partial}{\partial z_d} F(u) \right)^T
$$
We denote the space of holomorphic functions on a domain $\Omega\subset \C^d$ by $\mathcal{O}(\Omega)$ 
and the space of meromorphic functions, i.e., functions that locally coincide with the quotient of two holomorphic functions, by $\mathcal{M}(\Omega)$.\\
For a measureable, nonnegative function $w$ on $\Omega$ and $1\le p <+\infty$ we denote the weighted Lebesgue space by $L^p(\Omega,w)$ consisting of all measureable functions $F$ on $\Omega$ such that 
\begin{equation}\label{def:wtdLpnorm}
\|F\|_{L^p(\Omega,w)}:=\left( \int_\Omega |F|^p w\right)^{1/p} <+\infty.
\end{equation}
In the unweighted case, i.e., if  $w\equiv 1$, we will just write $L^p(\Omega)$ and $\|\cdot \|_{L^p(\Omega)}$ instead.
Note that \eqref{def:wtdLpnorm} also makes sense for vector valued functions $F$ by understanding $|F|$ as the euclidean length.\\
The Sobolev norms are defined by
\begin{equation*}
 \|\cdot\|_{W^{1,p}(\Omega)} := \|\cdot \|_{L^p(\Omega)} + \|\nabla\cdot \|_{L^p(\Omega)}.
\end{equation*}
\section{Stability and Cheeger constants}
\subsection{A first stability result}
This section will unveil our key mechanism for deriving stability estimates for phase retrieval under the general assumption that the quotient of two measurements is meromorphic.
This mechanism relies on the interplay between Poincar\'e inequalities, as defined next, and complex analysis.
\begin{definition}\label{def:poincare}
 Let $\Omega\subset \Cd$ be a domain equipped with a nonnegative, integrable weight $w$ and let $1\le p < +\infty$.
 We say that $\Omega$ supports a \emph{Poincar\'e inequality} if there exists a finite constant $C$ such that 
 \begin{equation}\label{eq:poincareineq}
  \inf_{c\in \C} \|F-c\|_{L^p(\Omega,w)} \le C \|\nabla F\|_{L^p(\Omega,w)} \quad \text{for all } F\in \mathcal{M}(\Omega)\cap L^p(\Omega,w).
 \end{equation}
The smallest possible constant in \eqref{eq:poincareineq} is called the \emph{Poincar\'e constant} of $\Omega$ and denoted by $\poincarec(\Omega,w,p)$.
\end{definition}
\begin{remark}
 Note that in the defining inequality of Definition \ref{def:poincare} functions are restricted to be meromorphic.
 This is certainly nonstandard but precisely the right concept for our purposes.
 Due to the famous \emph{Lavrentiev phenomenon}\cite{zhikov1995lavrentiev}, which states that smooth functions need not necessarily be dense in weighted Sobolev spaces,
 a Poincar\'e inequality of the type defined above does not necessarily imply a Poincar\'e inequality in the usual sense.
\end{remark}
The main result of this section provides an upper bound for the distance between two measurements whose quotient is assumed to be meromorphic 
in terms of an expression which only depends on the moduli of the two measurements.
\begin{theorem}\label{thm:firststab}
 Let $\Omega\subset \Cd$ be a domain and let $1\le p < +\infty$.
 Suppose that $F_1,F_2\in L^p(\Omega)$ are such that their quotient $F_2/F_1$ is meromorphic.
 Then it holds that 
 \begin{multline}\label{est:firststab}
  \inf_{|c|=1} \|F_2-cF_1\| \le   \||F_2|-|F_1|\|_{L^p(\Omega)} \\
  +2^{3/2} \poincarec(\Omega,|F_1|^p,p) \left( \|\nabla |F_1|-\nabla |F_2|\|_{L^p(\Omega)} 
  + \left\| \frac{\nabla |F_1|}{|F_1|} \left(|F_1|-|F_2|\right) \right\|_{L^p(\Omega)} \right)
 \end{multline}
\end{theorem}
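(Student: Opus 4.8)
The plan is to reduce the whole estimate to a weighted Poincar\'e inequality for the meromorphic quotient $h := F_2/F_1$, to control $\nabla h$ purely in terms of the moduli by exploiting holomorphy, and finally to pay a controlled price for enforcing the unimodular constraint $|c|=1$. First I would record the isometry
$$\|F_2 - c F_1\|_{L^p(\Omega)} = \|h - c\|_{L^p(\Omega,|F_1|^p)}, \qquad c\in\C,$$
which in particular shows $h\in L^p(\Omega,|F_1|^p)$ since $\|h\|_{L^p(\Omega,|F_1|^p)}=\|F_2\|_{L^p(\Omega)}<+\infty$. As $h\in\Mcal(\Omega)$ by hypothesis, Definition \ref{def:poincare} applies and furnishes a constant $c_0\in\C$ with
$$\delta := \|h - c_0\|_{L^p(\Omega,|F_1|^p)} = \|F_2 - c_0 F_1\|_{L^p(\Omega)} \le \poincarec(\Omega,|F_1|^p,p)\,\|\nabla h\|_{L^p(\Omega,|F_1|^p)}$$
(up to an arbitrarily small error which vanishes in the limit and which I suppress, since the infimum need not be attained).

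The second step is to bound $\|\nabla h\|_{L^p(\Omega,|F_1|^p)}$ by the bracketed expression in \eqref{est:firststab}. Wherever $h$ is finite and nonzero it is holomorphic, so the Cauchy--Riemann equations give $\partial_{x_j} h = \partial_{z_j} h$ and $\partial_{y_j} h = i\,\partial_{z_j} h$, whence $|\nabla h| = \sqrt{2}\,|h'|$; applying the same computation to the holomorphic function $\log h$ and using $\mathrm{Re}\,\log h = \log|h|$ yields $|h'/h| = \big|\nabla\log|h|\big|$ and therefore
$$|\nabla h| = \sqrt{2}\,|h|\,\big|\nabla\log|h|\big| = \sqrt{2}\,\frac{|F_2|}{|F_1|}\,\Big|\tfrac{\nabla|F_2|}{|F_2|} - \tfrac{\nabla|F_1|}{|F_1|}\Big|$$
almost everywhere (the poles and zeros of $h$ form a null set). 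Multiplying by $|F_1|$ and adding and subtracting $\nabla|F_1|$ inside the modulus rewrites the integrand as $\sqrt{2}\,\big|(\nabla|F_2|-\nabla|F_1|) + (|F_1|-|F_2|)\,\nabla|F_1|/|F_1|\big|$, so that Minkowski's inequality in $L^p(\Omega)$ gives precisely
$$\|\nabla h\|_{L^p(\Omega,|F_1|^p)} \le \sqrt{2}\Big(\|\nabla|F_1|-\nabla|F_2|\|_{L^p(\Omega)} + \big\|\tfrac{\nabla|F_1|}{|F_1|}(|F_1|-|F_2|)\big\|_{L^p(\Omega)}\Big).$$

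The final and most delicate step converts the unconstrained minimizer $c_0$ into an admissible $c$ with $|c|=1$. From the pointwise reverse triangle inequality one has $\big\||F_2|-|c_0|\,|F_1|\big\|_{L^p(\Omega)} \le \|F_2 - c_0F_1\|_{L^p(\Omega)} = \delta$, and hence
$$\big|1-|c_0|\big|\,\|F_1\|_{L^p(\Omega)} = \big\|(1-|c_0|)|F_1|\big\|_{L^p(\Omega)} \le \||F_1|-|F_2|\|_{L^p(\Omega)} + \delta.$$
Choosing $c := c_0/|c_0|$ (the degenerate case $c_0=0$ is handled directly and gives the same bound) yields $|c_0-c| = \big|1-|c_0|\big|$, so
$$\inf_{|c|=1}\|F_2-cF_1\|_{L^p(\Omega)} \le \delta + \big|1-|c_0|\big|\,\|F_1\|_{L^p(\Omega)} \le \||F_1|-|F_2|\|_{L^p(\Omega)} + 2\delta,$$
and combining the three displays together with $2\cdot\sqrt{2}=2^{3/2}$ produces \eqref{est:firststab}. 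The main obstacle is exactly this last passage: the Poincar\'e inequality only controls the distance to a free complex constant, and the device of projecting $c_0$ onto the unit circle while absorbing the defect $\big|1-|c_0|\big|\,\|F_1\|$ into the modulus term $\||F_1|-|F_2|\|_{L^p(\Omega)}$ is what both eliminates the spurious scaling degree of freedom and accounts for the extra factor of $2$. A secondary technical point, where several complex variables enter, is justifying the gradient identity under mere meromorphy, i.e.\ verifying that the singular set of $h$ is Lebesgue-negligible so that the almost-everywhere computation is legitimate.
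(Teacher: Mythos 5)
Your proposal is correct and follows essentially the same route as the paper's proof: rewrite the distance as a weighted norm of the meromorphic quotient $h=F_2/F_1$, apply the weighted Poincar\'e inequality, use the Cauchy--Riemann identity $|\nabla h|=\sqrt{2}\,\bigl|\nabla|h|\bigr|$ to pass to the moduli, and project the near-optimal complex constant onto the unit circle while bounding the defect by $\||F_1|-|F_2|\|_{L^p(\Omega)}$ via the (reverse) triangle inequality. The only cosmetic differences are the order of the steps (the paper removes the unimodularity constraint first, proving $\inf_{|c|=1}\|F_2-cF_1\|\le 2\inf_{c\in\C}\|F_2-cF_1\|+\||F_2|-|F_1|\|_{L^p(\Omega)}$, and then applies Poincar\'e) and that you phrase the gradient computation through logarithmic derivatives rather than the quotient rule; the constant $2^{3/2}=2\cdot\sqrt{2}$ arises identically in both arguments.
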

Inequality \eqref{est:firststab} is already quite close to a stability estimate of the desired mould.
If we neglect the logarithmic derivative $\nabla|F_1|/|F_1|$ for a moment, it states that --provided that $\poincarec(\Omega,|F_1|^p,p)$ is moderately small-- the distance between two measurements is comparable to the 
distance of the respective moduli, as measured in the Sobolev norm.\\
The remainder of this section is devoted to proving Theorem \ref{thm:firststab}.
A key role in the proof will be played by the fact that for holomorphic functions $F$ the local variation of $|F|$ coincides with the local variation of $F$ up to a factor. 
\begin{lemma}\label{lem:keylemma}
 Suppose that $F$ is holomorphic at a point $w\in \Cd$.
 Then it holds that $$|\nabla |F|(w)|=2^{-1/2}|\nabla F(w)|=|F'(w)|.$$
\end{lemma}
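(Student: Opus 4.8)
The plan is to reduce everything to two elementary ingredients: the Cauchy--Riemann equations for $F$ and the product rule applied to $|F|^2 = F\overline{F}$. Throughout I would work at a point where $F(w)\neq 0$, which is the natural domain of the statement: at a zero the modulus $|F|$ typically fails to be differentiable (near a simple zero it has a conical singularity behaving like $|F'(w)|\,|z-w|$), so there the left-hand side need not exist in the classical sense.

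First I would dispatch the second equality $2^{-1/2}|\nabla F(w)| = |F'(w)|$, which does not involve $|F|$ at all. Holomorphy of $F$ means $\partial F/\partial \overline{z}_j = 0$, i.e. $\partial F/\partial y_j = i\,\partial F/\partial x_j$ for each $j$. In particular $|\partial F/\partial y_j| = |\partial F/\partial x_j|$, so that $|\nabla F|^2 = \sum_{j=1}^d (|\partial_{x_j}F|^2 + |\partial_{y_j}F|^2) = 2\sum_{j=1}^d |\partial_{x_j}F|^2$. On the other hand, substituting the same relation into the Wirtinger derivative gives $\partial F/\partial z_j = \tfrac12(\partial_{x_j}F - i\,\partial_{y_j}F) = \partial_{x_j}F$, whence $|F'|^2 = \sum_{j=1}^d |\partial_{z_j}F|^2 = \sum_{j=1}^d|\partial_{x_j}F|^2 = \tfrac12|\nabla F|^2$.

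The substance of the lemma is the remaining identity $|\nabla|F|(w)| = |F'(w)|$. Differentiating $|F|^2 = F\overline{F}$ with respect to $x_j$ and $y_j$ and using $\partial_{x_j}\overline{F} = \overline{\partial_{x_j}F}$, I obtain $\partial_{x_j}|F| = \Re(\overline{F}\,\partial_{x_j}F)/|F|$ and likewise $\partial_{y_j}|F| = \Re(\overline{F}\,\partial_{y_j}F)/|F|$. The key simplification comes from feeding the Cauchy--Riemann relation $\partial_{y_j}F = i\,\partial_{x_j}F$ into the second expression: since $\Re(i\zeta) = -\Im(\zeta)$, the two summands for each $j$ combine as
$$(\partial_{x_j}|F|)^2 + (\partial_{y_j}|F|)^2 = \frac{\Re(\overline{F}\,\partial_{x_j}F)^2 + \Im(\overline{F}\,\partial_{x_j}F)^2}{|F|^2} = \frac{|\overline{F}\,\partial_{x_j}F|^2}{|F|^2} = |\partial_{x_j}F|^2,$$
where I simply used $\Re(\zeta)^2 + \Im(\zeta)^2 = |\zeta|^2$ with $\zeta = \overline{F}\,\partial_{x_j}F$. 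Summing over $j$ yields $|\nabla|F||^2 = \sum_{j=1}^d|\partial_{x_j}F|^2 = |F'|^2$, which is exactly the asserted equality, and chaining it with the previous step closes the chain of three equalities.

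The computation itself is short, so the only genuine obstacle is the degenerate behaviour at zeros of $F$ together with the real/imaginary-part bookkeeping: I expect the most delicate point to be deciding in what sense the identity is to be read when $F(w)=0$, while away from zeros it is just a matter of keeping the real and imaginary parts and the factor $\sqrt{2}$ straight through the Wirtinger calculus.
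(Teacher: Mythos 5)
Your proof is correct, and it rests on the same engine as the paper's --- the Cauchy--Riemann equations combined with differentiating the modulus --- but the bookkeeping is organized differently, in two ways worth noting. First, the paper splits $F=u+iv$ into real and imaginary parts and uses Cauchy--Riemann in the form $|\nabla v|=|\nabla u|$, $\nabla u\cdot\nabla v=0$, whereas you differentiate $|F|^2=F\overline{F}$ and keep everything complex, packaging each pair of partials as $\Re(\overline{F}\,\partial_{x_j}F)$ and $-\Im(\overline{F}\,\partial_{x_j}F)$; this lets you conclude $(\partial_{x_j}|F|)^2+(\partial_{y_j}|F|)^2=|\partial_{x_j}F|^2$ in one line. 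Second, and more substantively, the paper proves the lemma for $d=1$ and then lifts it to $\Cd$ by freezing all but one coordinate and summing over $j$; your argument needs no such reduction, since the identity $\partial_{y_j}F=i\,\partial_{x_j}F$ and the product rule are applied uniformly in all variables at once, so all dimensions are handled in a single pass. A further point in your favour: you make explicit the caveat $F(w)\neq 0$ (at a simple zero $|F|$ has a conical singularity and $\nabla|F|(w)$ does not exist), which the paper leaves implicit --- its own computation also divides by $|F|$ --- and which is harmless for the paper's application, where the lemma is invoked only almost everywhere for quotients of the form $F_2/F_1$. The trade-off is minor: the paper's real-variable splitting is perhaps more elementary to read, while your version is shorter and scales to several variables without an extra reduction step.
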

\begin{proof}
 Let us first assume that $d=1$. We split $F=u+iv$ into its real and imaginary part.
 At every point where $F$ is differentiable it holds that
 $$
 \nabla |F| = \nabla (u^2+v^2)^{1/2} = \frac12 \frac{2u \nabla u + 2v \nabla v}{|F|} = \frac{u \nabla u + v \nabla v}{|F|}
 $$
 and therefore 
 $$
 |\nabla |F||^2 = \frac{u^2 |\nabla u|^2 + v^2 |\nabla v|^2 +2 uv\left(\nabla u \cdot \nabla v\right)}{u^2+v^2}.
 $$
 Since $F$ is assumed to be holomorphic at $w$ the Cauchy-Riemann equations hold at $w$.
 In particular it holds that $|\nabla v(w)|^2=|\nabla u(w)|^2$ and that $\nabla u(w)\cdot \nabla v(w)=0$.
 Thus, it follows that $|\nabla |F|(w)|^2=|\nabla u(w)|^2$.\\
 On the other hand we obtain
 \begin{equation}\label{eq:nablaF}
 |\nabla F(w)|^2 = |\nabla u(w) + i \nabla v(w)|^2 = |\nabla u(w)|^2 + |\nabla v(w)|^2 =2 |\nabla u(w)|^2, 
 \end{equation}
 where we used again that $|\nabla v(w)|^2=|\nabla u(w)|^2$.
 Therefore the first identity $|\nabla |F|(w)|=2^{-1/2} |\nabla F(w)|$ holds true.\\
 For the second identity note that since $F$ is holomorphic at $w$ it holds that $F'(w)=\frac{\partial}{\partial z} F(w)= \frac{\partial}{\partial x} F(w)$.
 Thus, by making use of Cauchy-Riemann equations again and by \eqref{eq:nablaF}, we have that
 \begin{equation*}
|F'(w)|^2=u_x^2(w)+v_x^2(w)=|\nabla u(w)|^2=2^{-1}|\nabla F(w)|^2
 \end{equation*}
and therefore that $|F'(w)|=2^{-1/2} |\nabla F(w)|$.\\
 
The general case $d>1$ follows from the univariate case: 
Note that for any $1\le j \le d$ the mapping $\C \ni z\mapsto F(w_1,\ldots,w_{j-1}, z , w_{j+1},\ldots,w_d)$ is holomorphic at $w_j$.
Then by the first part it holds that
\begin{equation*}
 \begin{aligned}
  |\nabla |F|(w)|^2 &= \sum_{j=1}^d  \left[\left(\frac{\partial}{\partial x_j} |F|(w) \right)^2 + \left(\frac{\partial}{\partial y_j} |F|(w) \right)^2\right]\\
		    &= \sum_{j=1}^d 2^{-1} \left[ \left(\frac{\partial}{\partial x_j} F (w) \right)^2 + \left(\frac{\partial}{\partial y_j} F(w) \right)^2\right] =2^{-1} |\nabla F(w)|^2.
 \end{aligned}
\end{equation*}
Similarly we obtain that
\begin{equation*}
  |F'(w)|^2 = \sum_{j=1}^d \left| \frac{\partial}{\partial z_j} F(w)\right|^2
	    =  \sum_{j=1}^d 2^{-1}\left[ \left(\frac{\partial}{\partial x_j} F (w) \right)^2 + \left(\frac{\partial}{\partial y_j} F(w) \right)^2\right] =2^{-1} |\nabla F(w)|^2,
\end{equation*}
which completes the proof.
 \end{proof}
With Lemma \ref{lem:keylemma} at hand we are ready to prove the main result of this section:
\begin{proof}[Proof of Theorem \ref{thm:firststab}]
In a first, preperatory step we show that the constraint $|c|=1$ in the distance 
$$
\inf_{|c|=1} \|F_2-cF_1\|_{L^p(\Omega)}
$$
can effectively be dropped if the unsigned measurements are close, cf. Inequality \eqref{est:unconstrainedstep1}.
The distance term without the constraint on $c$ will be controlled in the second step by making use of Poincar\'e's inequality as well as Lemma \ref{lem:keylemma}.\\

{\bf Step 1: Getting rid of the constraint}\\
 For $\epsilon>0$ let $c_\epsilon \in \C$ be such that 
 \begin{equation}\label{eq:defceps}
 \|F_2-c_\epsilon F_1\|_{L^p(\Omega)} \le \inf_{c\in \C} \|F_2-c F_1\|_{L^p(\Omega)} + \epsilon.
 \end{equation}
 Note that by continuity $c_\epsilon$ can always be choosen to be nonzero.
 By making use of triangle inequality we estimate
 \begin{equation}\label{est:unconstrained1}
  \inf_{|c|=1} \|F_2-c F_1\|_{L^p(\Omega)} \le \|F_2-\frac{c_\epsilon}{|c_\epsilon|} F_1\|_{L^p(\Omega)} \le \|F_2-c_\epsilon F_1\|_{L^p(\Omega)} + \|c_\epsilon F_1-\frac{c_\epsilon}{|c_\epsilon|} F_1\|_{L^p(\Omega)}.
 \end{equation}
Furthermore the last term can be bounded by
\begin{equation}\label{est:unconstrained2}
\begin{aligned}
 \|c_\epsilon F_1-\frac{c_\epsilon}{|c_\epsilon|} F_1\|_{L^p(\Omega)} &=\| |c_\epsilon F_1|-|F_1|\|_{L^p(\Omega)}\\
					 &\le \| |c_\epsilon F_1|-|F_2|\|_{L^p(\Omega)} + \||F_2|-|F_1|\|_{L^p(\Omega)}\\
					 &\le \| c_\epsilon F_1-F_2\|_{L^p(\Omega)} + \||F_2|-|F_1|\|_{L^p(\Omega)}.
\end{aligned}
\end{equation}
Combining \eqref{est:unconstrained1} and \eqref{est:unconstrained2}, together with \eqref{eq:defceps} yields that 
$$
\inf_{|c|=1} \|F_2-c F_1\|_{L^p(\Omega)} \le 2 \inf_{c\in \C} \|F_2-c F_1\|_{L^p(\Omega)} + 2 \epsilon + \| |F_2|-|F_1|\|_{L^p(\Omega)}.
$$
Since $\epsilon>0$ was arbitrary it holds that
\begin{equation}\label{est:unconstrainedstep1}
\inf_{|c|=1} \|F_2-c F_1\|_{L^p(\Omega)} \le 2 \inf_{c\in \C} \|F_2-c F_1\|_{L^p(\Omega)} + \| |F_2|-|F_1|\|_{L^p(\Omega)}.
\end{equation}

{\bf Step 2: Bound for the unconstrained distance}\\
First we rewrite for arbitrary $c\in \C$
$$
\|F_2-c F_1\|_{L^p(\Omega)} = \|F_2/F_1-c\|_{L^p(\Omega,|F_1|^p)}.
$$
Note that by assumption the quotient $F_2/F_1$ is meromorphic, that $\|F_2/F_1\|_{L^p(\Omega,|F_1|^p)} = \|F_2\|_{L^p(\Omega)}<+\infty$ and that the weight $|F_1|^p$ is integrable due to the assumption that $F_1\in L^p(\Omega)$.
Therefore Poincar\'e's inequality can be applied and we obtain
\begin{equation}\label{est:poincare}
 \begin{aligned}
  \inf_{c\in\C} \|F_2-c F_1\|_{L^p(\Omega)} &= \inf_{c\in \C} \|F_2/F_1-c\|_{L^p(\Omega,|F_1|^p)}\\
				       &\le \poincarec(\Omega,|F_1|^p,p) \left\|\nabla \frac{F_2}{F_1}\right\|_{L^p(\Omega,|F_1|^p)}\\
				       &=2^{1/2} \poincarec(\Omega,|F_1|^p,p) \left\|\nabla \left|\frac{F_2}{F_1}\right|\right\|_{L^p(\Omega,|F_1|^p)}
 \end{aligned}
\end{equation}
where the last equality follows from Lemma \ref{lem:keylemma}, since $F_2/F_1$ is holomorphic in $\Omega$ with the exception of a set of measure zero.
Using that 
$$
\nabla \left|\frac{F_2}{F_1}\right| = |F_1|^{-2} \cdot \left( |F_1|\nabla |F_2|- |F_2|\nabla |F_1|\right)
$$
almost everywhere in $\Omega$ we estimate
\begin{equation}\label{est:gradientmod}
 \begin{aligned}
  \left\|\nabla \left|\frac{F_2}{F_1}\right|\right\|_{L^p(\Omega,|F_1|^p)} &= \left\| |F_1|^{-2} \cdot \left( |F_1|\nabla |F_2|- |F_2|\nabla |F_1|\right) \right\|_{L^p(\Omega,|F_1|^p)}\\
				    &\le \left\| \frac{|F_1|\nabla |F_2|- |F_1|\nabla |F_1|}{|F_1|^2} \right\|_{L^p(\Omega,|F_1|^p)} 
				    + \left\| \frac{|F_1|\nabla |F_1|- |F_2|\nabla |F_1|}{|F_1|^2} \right\|_{L^p(\Omega,|F_1|^p)}\\
				    &=\left\| \nabla|F_1| - \nabla|F_2|\right\|_{L^p(\Omega)} + \left\| \frac{\nabla |F_1|}{|F_1|} \left(|F_1|-|F_2|\right) \right\|_{L^p(\Omega)}.
 \end{aligned}
\end{equation}
By combining the Estimates \eqref{est:unconstrainedstep1},\eqref{est:poincare} and \eqref{est:gradientmod} we arrive at the desired bound in \eqref{est:firststab}.
\end{proof}
\subsection{Cheeger's inequality}
Next we want to provide some insight on the Poincar\'e constant $C_P(\Omega,|F_1|^p,p)$, which by Theorem \ref{thm:firststab} is closely related to the question of local stability at $F_1$.\\
The following result is inspired by the work of Jeff Cheeger in \cite{cheeger}, where the smallest eigenvalue of the Laplacian on a Riemannian manifold is related to a geometric 
quantity which is similar to the Cheeger constant as introduced in Definition \ref{def:cheegerconst}.
\begin{theorem}\label{thm:cheeger}
 Let $1\le p \le 2$, let $\Omega\subset\R^{2d}$ be a domain and let $w$ be a nonnegative and continuous weight on $\Omega$.
 Then it holds that
 $$
 C_P(\Omega,w,p)\le 8 \cdot h(w,\Omega)^{-1}.
 $$
\end{theorem}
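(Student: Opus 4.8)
The plan is to follow the classical route of Cheeger's original argument: bound the Poincar\'e constant by running a coarea/layer-cake decomposition over the superlevel sets of the competitor function and estimating each level set through the isoperimetric-type quotient defining $h(w,\Omega)$. Since the functions $F\in\Mcal(\Omega)\cap L^p(\Omega,w)$ are complex valued, the first move is to reduce to real scalar functions. Writing $F=u+iv$ and choosing $c=a+ib$ with $a,b$ \emph{weighted medians} of $u,v$ (i.e. $\int_{\{u>a\}}w\le\tfrac12\int_\Omega w$ and $\int_{\{u<a\}}w\le\tfrac12\int_\Omega w$, similarly for $v$), I would exploit that $p\le 2$: the map $s\mapsto s^{p/2}$ is then subadditive, so $|F-c|^p=\big((u-a)^2+(v-b)^2\big)^{p/2}\le|u-a|^p+|v-b|^p$ pointwise, while $|\nabla u|^p+|\nabla v|^p\le 2|\nabla F|^p$ because $|\nabla u|,|\nabla v|\le|\nabla F|$. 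This is precisely where the hypothesis $p\le 2$ is spent, and it reduces the statement to the one-sided scalar inequality $\|u-a\|_{L^p(\Omega,w)}\le p\,h(w,\Omega)^{-1}\|\nabla u\|_{L^p(\Omega,w)}$ for real $u$ with weighted median $a$.

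For the scalar inequality I would split $u-a=u_+-u_-$ and treat $G:=u_+^p$, the contribution of $u_-$ being symmetric. By Cavalieri's principle for the measure $w\,dx$ one has $\int_\Omega G\,w=\int_0^\infty w(\{G>s\})\,ds$, and the median property guarantees that every superlevel set $\{G>s\}=\{u>a+s^{1/p}\}$, $s>0$, has $w$-mass at most $\tfrac12\int_\Omega w$, so the $\min$ in the definition of $h(w,\Omega)$ is realised by $\int_{\{G>s\}}w$. Hence $w(\{G>s\})\le h(w,\Omega)^{-1}\int_{\partial\{G>s\}\cap\Omega}w$ for a.e. $s$, and integrating in $s$ together with the weighted coarea formula $\int_0^\infty\big(\int_{\{G=s\}}w\,d\mathcal H^{2d-1}\big)ds=\int_\Omega|\nabla G|\,w$ gives $\int_\Omega u_+^p\,w\le h^{-1}\int_{\{u>a\}}p\,u_+^{p-1}|\nabla u|\,w$. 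A single application of H\"older's inequality with exponents $p$ and $p'=p/(p-1)$, using $(p-1)p'=p$ and dividing by $\big(\int u_+^p w\big)^{1/p'}$, yields $\|u_+\|_{L^p(\Omega,w)}\le p\,h^{-1}\|\nabla u\|_{L^p(\{u>a\},w)}$. The same bound for $u_-$ and additivity over the two disjoint regions produce the desired scalar estimate.

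Reassembling, $\|F-c\|_{L^p(\Omega,w)}^p\le\|u-a\|_{L^p}^p+\|v-b\|_{L^p}^p\le(p\,h^{-1})^p\big(\|\nabla u\|_{L^p}^p+\|\nabla v\|_{L^p}^p\big)\le 2(p\,h^{-1})^p\|\nabla F\|_{L^p}^p$, so $\poincarec(\Omega,w,p)\le 2^{1/p}p\,h(w,\Omega)^{-1}$. On $1\le p\le 2$ the factor $2^{1/p}p$ is increasing and attains its maximum $2\sqrt2<8$ at $p=2$, comfortably below the claimed constant; the slack leaves ample room for looser bookkeeping.

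The main obstacle is not the arithmetic but the regularity required to run the coarea machinery for a merely meromorphic competitor. I would handle this by noting that $u=\Re F$ and $v=\Im F$ are real-analytic off the pole set of $F$, which is an analytic variety of real codimension two and hence $w\,dx$-null; membership of $F$ and $\nabla F$ in $L^p(\Omega,w)$ lets me restrict all integrals to this open full-measure set. Sard's theorem then makes almost every value of $u$ regular, so that for a.e. $s$ the boundary $\partial\{u>a+s^{1/p}\}\cap\Omega$ is a smooth hypersurface and the corresponding set is admissible in the class $\mathcal C$ underlying $h(w,\Omega)$; this is exactly the point that must be argued with care to avoid the Lavrentiev-type pathology flagged in the remark following Definition \ref{def:poincare}. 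Existence of the weighted medians $a,b$ uses $\int_\Omega w<+\infty$, which is available since the weight of interest is $|F_1|^p$ with $F_1\in L^p(\Omega)$.
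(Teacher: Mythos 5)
Your core argument---splitting $F=u+iv$, choosing weighted medians, using subadditivity of $s\mapsto s^{p/2}$ (which is exactly where $p\le 2$ enters), and then running Cavalieri, the coarea formula and the H\"older trick on $u_\pm^p$---is the classical Cheeger--Maz'ya route, and your constant bookkeeping is correct: $2^{1/p}p\le 2\sqrt2<8$ on $1\le p\le 2$. Note that the paper itself does not prove this theorem at all; it cites the appendix of the predecessor CPAM paper and asserts that the one-dimensional argument ``generalizes readily,'' so your proposal is structurally more self-contained than the text it would replace. The reduction to scalar functions and the level-set computation are fine as written.

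The genuine gap is in your last paragraph, i.e.\ precisely at the point you yourself flag as delicate. Sard's theorem produces regular values only on the open set where $u=\Re F$ is smooth, i.e.\ off the pole/indeterminacy set $P$ of the meromorphic competitor; it says nothing about boundary points of $\{u>t\}$ lying \emph{on} $P$, and these can be singular for \emph{every} level, not just a null set of levels. Concretely, for $d=1$ take $F(z)=1/z^2$, so that $u=r^{-2}\cos(2\theta)$: for every $t>0$ the set $\{u>t\}$ is bounded by the lemniscate $r^2=\cos(2\theta)/t$, which self-intersects at the pole $z=0$, hence $\{u>t\}\notin\mathcal{C}$ for \emph{all} $t>0$. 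For $d\ge 2$ the situation is worse: at an indeterminacy point (e.g.\ $F=z_1/z_2$ near the origin in $\C^2$) the closure of every level set $\{u=t\}$ is a real quadric with a singular vertex at the indeterminacy point, again for every $t$. Since $h(w,\Omega)$ is defined as an infimum over the class $\mathcal{C}$ of open sets with \emph{smooth} relative boundary, you are not entitled to the inequality $\int_{\partial C\cap\Omega}w\ge h(w,\Omega)\,\min\{\int_C w,\int_{\Omega\setminus C}w\}$ for such inadmissible $C$: a set outside $\mathcal{C}$ could a priori have a smaller isoperimetric quotient than $h$. To close the gap you need an excision/approximation step: remove from $\{G>s\}$ a small ball or tube around the (real codimension two) singular part of the boundary, chosen so that its boundary sphere meets $\{G=s\}$ transversally; observe that the added surface term $\int w\,d\mathcal{H}^{2d-1}$ and the removed $w$-mass both tend to zero as the excised neighbourhood shrinks (using continuity of $w$ and the dimension drop); smooth the resulting corners; and pass to the limit in the isoperimetric quotient. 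Alternatively, one can show that the infimum defining $h$ does not change if $\mathcal{C}$ is enlarged to sets whose relative boundary is smooth outside a closed $\mathcal{H}^{2d-1}$-null set. Without one of these arguments the key inequality $w(\{G>s\})\le h(w,\Omega)^{-1}\int_{\partial\{G>s\}\cap\Omega}w$, on which your whole chain rests, is unjustified.
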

\begin{proof}
 A proof for the case $d=1$ is carried out in the Appendix of our preceding paper \cite{stablegaborpr} and generalizes readily to the multivariate case.
\end{proof}
Theorem \ref{thm:cheeger} immediately implies the following version of the stability result Theorem \ref{thm:firststab}.
\begin{corollary}\label{cor:stabcheeger}
 Let $\Omega\subset \Cd$ be a domain and let $1\le p \le 2$.
 Suppose that $F_1,F_2\in L^p(\Omega)$ are such that their quotient $F_2/F_1$ is meromorphic and suppose that $|F_1|$ is continuous.
Then it holds that 
 \begin{multline*}
  \inf_{|c|=1} \|F_2-cF_1\|_{L^p(\Omega)} \le   \||F_2|-|F_1|\|_{L^p(\Omega)} \\
  +2^{9/2} \cdot h(|F_1|^p,\Omega)^{-1} \left( \|\nabla |F_1|-\nabla |F_2|\|_{L^p(\Omega)} 
  + \left\| \frac{\nabla |F_1|}{|F_1|} \left(|F_1|-|F_2|\right) \right\|_{L^p(\Omega)} \right).
 \end{multline*}
\end{corollary}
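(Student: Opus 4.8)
The plan is to obtain the corollary as the direct composition of the two preceding results, Theorem~\ref{thm:firststab} and Theorem~\ref{thm:cheeger}; no new analysis is needed, only a verification that the hypotheses match and a bookkeeping of the constants.

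First I would invoke Theorem~\ref{thm:firststab}. Its hypotheses---that $\Omega\subset\Cd$ is a domain, that $1\le p<+\infty$, that $F_1,F_2\in L^p(\Omega)$, and that $F_2/F_1$ is meromorphic---are all implied by the assumptions of the corollary (the range $1\le p\le 2$ is narrower, and the extra continuity of $|F_1|$ is not yet needed). This produces the estimate~\eqref{est:firststab}, in which the coefficient in front of the two gradient terms is $2^{3/2}\poincarec(\Omega,|F_1|^p,p)$.

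Next I would replace the Poincar\'e constant by the reciprocal Cheeger constant by appealing to Theorem~\ref{thm:cheeger}. Here the restriction $1\le p\le 2$ is exactly what that theorem demands, and the continuity hypothesis on $|F_1|$ is precisely what is required to guarantee that the weight $w:=|F_1|^p$ is nonnegative and continuous on $\Omega$ (continuity of $|F_1|$ together with $p\ge 1$ yields continuity of $|F_1|^p$). Theorem~\ref{thm:cheeger} then gives
\[
\poincarec(\Omega,|F_1|^p,p)\le 8\,h(|F_1|^p,\Omega)^{-1}.
\]
Inserting this into~\eqref{est:firststab} and combining the constants via $2^{3/2}\cdot 8=2^{3/2}\cdot 2^{3}=2^{9/2}$ yields exactly the asserted inequality.

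I do not expect any genuine obstacle: the two substantive ingredients---the reduction of the phase-retrieval distance to a Poincar\'e-type bound through Lemma~\ref{lem:keylemma}, and the Cheeger-type estimate for the Poincar\'e constant---are already in place. The only point deserving a word of care is the continuity assumption on $|F_1|$, which plays no role in Theorem~\ref{thm:firststab} but is indispensable for invoking Theorem~\ref{thm:cheeger}, since the Cheeger constant in Definition~\ref{def:cheegerconst} is defined for continuous weights.
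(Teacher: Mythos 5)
Your proposal is correct and matches the paper's own argument exactly: the paper states that Corollary~\ref{cor:stabcheeger} follows immediately by inserting the bound $\poincarec(\Omega,|F_1|^p,p)\le 8\,h(|F_1|^p,\Omega)^{-1}$ from Theorem~\ref{thm:cheeger} into the estimate of Theorem~\ref{thm:firststab}, with the constant $2^{3/2}\cdot 8=2^{9/2}$ arising just as you computed. Your observation about the role of the continuity hypothesis on $|F_1|$ (needed only for the Cheeger bound, not for Theorem~\ref{thm:firststab}) is also accurate.
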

\section{On the growth of the logarithmic derivative of holomorphic functions}\label{sec:logderiv}
The estimates in Theorem \ref{thm:firststab} and Corollary \ref{cor:stabcheeger} include the logarithmic derivative of the modulus of $F_1$, 
a term which is rather undesirable as we want to obtain a bound which depends on the difference of $|F_1|$ and $|F_2|$ only.\\
This section is devoted to the study of the logarithmic derivatives of entire functions that satisfy certain growth restrictions.
More precisely, we consider the following class of entire functions on $\C^d$.
  \begin{definition}\label{def:Oab}
   Let $\alpha, \beta >0$, then 
   $$
   \Oab(\C^d) := \{ G\in \mathcal{O}(\C^d): ~ M_G(r)\le \abs{G(0)} e^{\alpha r^\beta} ~\forall r>0\},
   $$
   where we set $M_G(r):=\max_{\abs{z}\le r} \abs{G(z)}$.
  \end{definition}
\begin{remark}
Note that we require a pointwise inequality to hold, whereas in the definition of type and order a similar inequality only needs to hold in an asymptotic sense.
Consequently for an entire function $G$ of type $\tau\le a$ and order $\sigma\le b$ in general
$G\notin \Oab(\C^d).$
\end{remark}
The quantity of our interest is the $L^p$-norm of $(\log G)'=G'/G$ on balls centered at the origin.
Our results reveal that $\norm{(\log G)'}_{L^p(B_r)}$ grows at most polynomially in $r$ and provide
explicit bounds that depend on $\alpha, \beta$ but are remarkably independent of $G\in\Oab(\C^d)$.\\
The main theorem of this section reads as follows.
\begin{theorem}\label{thm:main}
 Let $1\le p <1+1/(2d-1)$. There exists a constant $c>0$ that only depends on $d$ and $p$ such that for all $G\in \Oab(\C^d)$, $G\neq 0$ and all $r>0$ the estimate 
 \begin{equation}
 \norm{(\log G)'}_{L^p(B_r)} \le c  \alpha 2^{2d+2\beta}r^{2d+\beta-1}
 \end{equation}
 holds.
\end{theorem}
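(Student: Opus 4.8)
The plan is to pass from the holomorphic function $G$ to the subharmonic function $u:=\log|G|$ on $\R^{2d}$ and to estimate $\nabla u$ by potential theory. The starting observation is that $(\log G)'=G'/G$ and that, away from the zeros, $\nabla u=\nabla|G|/|G|$, so Lemma \ref{lem:keylemma} gives
\[
|\nabla u|=\frac{|\nabla|G||}{|G|}=\frac{|G'|}{|G|}=|(\log G)'|\qquad\text{a.e.}
\]
Thus the theorem is equivalent to a bound on $\|\nabla u\|_{L^p(B_r)}$. Since both hypothesis and conclusion are invariant under $G\mapsto cG$, I would first normalize $|G(0)|=1$, so that $u(0)=0$ and, by the pointwise growth built into $\Oab(\C^d)$, $u(z)\le\alpha|z|^\beta$ for all $z$.

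Next I would quantify the "mass" of $u$. Let $\mu:=\Delta u\ge 0$ be its Riesz measure (the integration current over the zero divisor of $G$, up to a constant). The classical identity for spherical means of subharmonic functions,
\[
\frac{1}{|\partial B_\rho|}\int_{\partial B_\rho}u\,d\sigma-u(0)=\int_0^\rho\frac{\mu(B_t)}{|\partial B_t|}\,dt ,
\]
combined with the upper bound $\tfrac{1}{|\partial B_\rho|}\int_{\partial B_\rho}u\,d\sigma\le\alpha\rho^\beta$ and the monotonicity of $t\mapsto\mu(B_t)$, lets me compare the integral on $[r,2r]$ (where $|\partial B_t|\sim t^{2d-1}$) to extract $\mu(B_{2r})\lesssim\alpha\,r^{\beta+2d-2}$; this is the multivariate analogue of the Jensen bound $n(r)\lesssim\alpha r^\beta$ on the number of zeros. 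The same two-sided reasoning — mean value $\ge u(0)=0$ for the lower bound, the growth hypothesis for the upper bound — also yields $\tfrac{1}{|\partial B_{2r}|}\int_{\partial B_{2r}}|u|\,d\sigma\lesssim\alpha r^\beta$.

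On the ball $B_{2r}$ I would then use the Riesz decomposition $u=\int_{B_{2r}}G_{B_{2r}}(\cdot,\zeta)\,d\mu(\zeta)+h$, with $G_{B_{2r}}$ the Green function and $h$ harmonic with boundary values $u|_{\partial B_{2r}}$, so that $\nabla u$ splits into a potential part and $\nabla h$. For $z\in B_r$ the gradient of the Green potential is controlled by the Riesz potential $V(z):=\int_{B_{2r}}|z-\zeta|^{-(2d-1)}\,d\mu(\zeta)$, and Minkowski's integral inequality gives $\|V\|_{L^p(B_r)}\le\mu(B_{2r})\,\sup_{\zeta}\big\||\cdot-\zeta|^{-(2d-1)}\big\|_{L^p(B_r)}$. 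Here the hypothesis $p<1+1/(2d-1)$ enters decisively and sharply: it is exactly the condition $p(2d-1)<2d$ under which $|\cdot|^{-(2d-1)}\in L^p_{\mathrm{loc}}(\R^{2d})$, and it produces $\big\||\cdot-\zeta|^{-(2d-1)}\big\|_{L^p(B_r)}\lesssim r^{2d/p-(2d-1)}$. For the harmonic part, the interior gradient estimate $\|\nabla h\|_{L^\infty(B_r)}\lesssim r^{-1}\tfrac{1}{|\partial B_{2r}|}\int_{\partial B_{2r}}|u|\lesssim\alpha r^{\beta-1}$ together with $|B_r|^{1/p}\sim r^{2d/p}$ disposes of $\nabla h$. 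Collecting the contributions,
\[
\|\nabla u\|_{L^p(B_r)}\lesssim\alpha\,r^{\beta+2d-2}\cdot r^{2d/p-(2d-1)}+\alpha\,r^{\beta-1}\cdot r^{2d/p}\lesssim\alpha\,r^{2d/p+\beta-1},
\]
which at $p=1$ is precisely the advertised $r^{2d+\beta-1}$ (Theorem B) and, since $2d/p\le 2d$, is dominated by it; the dyadic comparison on $[r,2r]$ is what generates the $2^{2d+2\beta}$-type constant.

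The main obstacle I anticipate is twofold. The delicate analytic point is the Riesz-potential estimate while keeping the constant dependent on $d,p$ only: this is exactly where the upper cutoff on $p$ cannot be relaxed, since at $p=1+1/(2d-1)$ the local $L^p$-norm of $|\cdot|^{-(2d-1)}$ blows up. The more structural point is the control of the harmonic part $h$: one needs genuine two-sided control of the boundary average of $|u|$, and the lower bound on the mean of $u$ — equivalently, control of the negative part of $\log|G|$ created by clustering zeros — is available only through subharmonicity and the normalization $u(0)=0$; without it the zeros could make $h$ arbitrarily steep. The reduction via Lemma \ref{lem:keylemma}, the decomposition, and the final bookkeeping of powers of $r$ are then routine.
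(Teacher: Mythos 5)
Your proposal is, at its core, the same proof as the paper's, written in the abstract language of potential theory rather than through explicit kernels. The Riesz decomposition of $u=\log\abs{G}$ on $B_{2r}$ into a Green potential of the Riesz measure plus the harmonic extension of the boundary values \emph{is} the Poisson--Jensen formula (Theorem \ref{thm:poissonjensen1}) that the paper differentiates; your spherical-means bound $\mu(B_{2r})\lesssim \alpha r^{2d-2+\beta}$ is Proposition \ref{lem:zerosandgrowth}; your interior gradient estimate for the harmonic part is the paper's estimate of the terms $I$ resp.\ $III$ (including the same use of subharmonicity plus the normalization $u(0)=0$ to get two-sided control of the boundary integral of $\abs{u}$); and your Riesz-potential estimate is the paper's treatment of $II$ resp.\ $IV$, with Minkowski's integral inequality in place of the paper's Jensen-inequality argument --- both hinge on exactly the condition $p(2d-1)<2d$. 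The genuine differences are matters of packaging: you pass to $\nabla\log\abs{G}$ at the outset via Lemma \ref{lem:keylemma}, whereas the paper complexifies ($\log G=2\log\abs{G}-\log\overline{G}$) and applies Wirtinger derivatives so that only holomorphic kernels survive; and your formulation treats all $d\ge1$ uniformly, whereas the paper splits $d=1$ and $d\ge2$ because the Poisson--Jensen kernels differ. Your route is arguably cleaner on the second count, provided you verify the Green-function gradient bound $\abs{\nabla_z G_{B_{2r}}(z,\zeta)}\lesssim\abs{z-\zeta}^{-(2d-1)}$ for $z\in B_r$, $\zeta\in B_{2r}$, including the logarithmic case $d=1$ (it does hold, by the image-charge computation you sketch).

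There is, however, one step that fails as written: the closing claim that $\alpha r^{2d/p+\beta-1}$ is dominated by $\alpha r^{2d+\beta-1}$ because $2d/p\le 2d$. That comparison of powers goes the right way only for $r\ge1$; for $r<1$ and $p>1$ it reverses. In fact your exponent $2d/p+\beta-1$ is the correct one, and the theorem as stated is itself false for small $r$ when $p>1$: take $d=1$ and $G(z)=e^{z^2}\in\mathcal{O}_1^2(\C)$, so that $(\log G)'(z)=2z$ and $\norm{(\log G)'}_{L^p(B_r)}\sim r^{1+2/p}$, which is not $O(r^3)$ as $r\to 0$ unless $p=1$. You should not feel bad about this: the paper's own proof commits the same sin, treating $c_{d,p}=\norm{z\mapsto\abs{z}^{-2d+1}}_{L^p(B_{r/2})}$ as an $r$-independent constant although it scales like $r^{2d/p-(2d-1)}$, which is precisely the factor needed to turn $\mu(B_r)\sim r^{2d-2+\beta}$ into $r^{2d/p+\beta-1}$. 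The defect is invisible in the case $p=1$ (Theorem B), where $2d/p=2d$ and both arguments are correct for all $r>0$, and it is harmless downstream, since Proposition \ref{prop:absorblogderiv} only invokes the theorem with radii $r=2^j\ge1$. So: either restrict your final sentence to $r\ge1$, or state your conclusion with the exponent $2d/p+\beta-1$; with that repair your proof is complete, and in fact sharper than the published statement.
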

The results of this section rely heavily on the formula of \emph{Poisson-Jensen}.
In the onedimensional case the formula is well-known, see \cite{ahlfors}.
In higher dimensions a similar formula is established for subharmonic functions in potential theory, cf. \cite{hayman}.
Since $\log \abs{G}$ is subharmonic for any holomorphic function $G$ \cite{ronkin}, the formula can be applied on $\log \abs{G}$ and leads to the following result:
\begin{theorem}[Poisson-Jensen]\label{thm:poissonjensen1}
Suppose $G:\C^d\rightarrow \C$ is entire.\\
In case $d=1$ let $z_1,z_2,\ldots$ denote the zeros of $G$ repeated according to multiplicity. 
 Then for $r>0$ and $\abs{z}<r$ it holds that 
 \begin{equation}\label{eq:poissonjensen1}
	       \log \abs{G(z)} =\frac{1}{2\pi} \int_0^{2\pi} \log \abs{G(r e^{i\theta})} \frac{r^2-|z|^2}{|re^{i\theta}-z|^2}~d\theta - 
	       \sum_{k:\abs{z_k}<r} \log \abs{\frac{r^2-\overline{z_k}z}{r(z-z_k)}}.
 \end{equation}
In case $d\ge2$ there exists a Borel measure $\mu_G$ on $\C^d$ such that for any $r>0$ and $\abs{z}<r$
 \begin{equation}\label{eq:poissonjensen2}
 \begin{split}
  \log \abs{G(z)}= &~ \frac{1}{S_{d-1}\cdot r} \int_{\partial B_r} \log\abs{G(\xi)} \frac{r^2-\abs{z}^2}{\abs{z-\xi}^{2d}} ~d\sigma(\xi)\\
  &- \int_{B_r} \frac{1}{\abs{z-\xi}^{2d-2}} - \left(\frac{r}{\abs{\xi}\abs{z-\xi r^2 / \abs{\xi}^2}}\right)^{2d-2} ~d\mu_G(\xi)
 \end{split}
 \end{equation}
 holds true, where $S_{d-1}$ denotes the surface area of the unit sphere and $\sigma$ denotes the surface measure on $\partial B_r$.
\end{theorem}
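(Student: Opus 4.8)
The plan is to obtain both \eqref{eq:poissonjensen1} and \eqref{eq:poissonjensen2} as two instances of a single statement from classical potential theory: the representation of a subharmonic function on a ball as the Poisson integral of its boundary values minus the Green potential of its Riesz measure. The starting point is that for any entire $G$ the function $u:=\log\abs{G}$ is subharmonic on $\C^d\cong\R^{2d}$ (indeed plurisubharmonic), as recorded in the cited literature; it is harmonic away from the zero set of $G$ and has the expected $-\infty$ singularities along it. By the Riesz decomposition theorem $u$ carries a canonical nonnegative Borel measure $\mu_G$, its Riesz measure, characterized by $\mu_G=c_{2d}\,\Delta u$ in the distributional sense. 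This is precisely the measure in the statement, and the Poincar\'e--Lelong formula identifies it (up to a dimensional constant) with the integration measure over the zero divisor of $G$ counted with multiplicity.

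For the case $d\ge 2$ I would fix $r>0$ and work on the ball $B_r\subset\R^{n}$ with $n=2d\ge 4$. The first step is to recall the explicit Green's function for the Euclidean ball in this dimension, namely
$$
\Gamma_{B_r}(z,\xi) = \frac{1}{\abs{z-\xi}^{2d-2}} - \left(\frac{r}{\abs{\xi}\,\abs{z-\xi r^2/\abs{\xi}^2}}\right)^{2d-2},
$$
which is produced from the fundamental solution $\abs{\cdot}^{-(2d-2)}$ by the method of images, reflecting $\xi$ to its inverse point $\xi r^2/\abs{\xi}^2$ across $\partial B_r$ so that the boundary values cancel; the associated harmonic measure is the Poisson kernel $\tfrac{1}{S_{d-1}r}\tfrac{r^2-\abs{z}^2}{\abs{z-\xi}^{2d}}\,d\sigma(\xi)$. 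The Riesz representation formula on the ball then reads exactly as \eqref{eq:poissonjensen2}, the boundary term solving the Dirichlet problem with data $u|_{\partial B_r}$ and the volume term being the Green potential of $\mu_G$. To make this rigorous I would first verify it for smooth $u$ via Green's second identity applied to $u$ and $\Gamma_{B_r}(z,\cdot)$, and then pass to the subharmonic $u=\log\abs{G}$ by mollification, using that the Riesz measures of the mollifications converge weakly to $\mu_G$.

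The case $d=1$ is the classical Poisson--Jensen formula and follows from the same scheme, but the structure is more transparent and admits a direct argument. Since $\log\abs{G}$ is harmonic on $B_r$ except at the zeros $z_k$, the Riesz measure is the discrete measure $2\pi\sum_k\delta_{z_k}$, and the Green's function of the disk is $\log\bigl|(r^2-\overline{\xi}z)/(r(z-\xi))\bigr|$; specializing the general representation collapses the Green potential to the finite sum $\sum_{k:\abs{z_k}<r}\log\bigl|(r^2-\overline{z_k}z)/(r(z-z_k))\bigr|$, giving \eqref{eq:poissonjensen1}. Equivalently, and perhaps more cleanly, I would factor out the interior zeros by a finite Blaschke product $B(z)=\prod_k r(z-z_k)/(r^2-\overline{z_k}z)$, note that $\abs{B}\equiv 1$ on $\partial B_r$, apply the ordinary Poisson integral formula to the harmonic function $\log\abs{G/B}$, and finally restore the term $\log\abs{B}$.

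The main obstacle I anticipate is not the formula itself but the careful bookkeeping of integrability and of the precise identification of $\mu_G$. One must ensure that $\log\abs{G}$ is integrable over $\partial B_r$ so that the boundary integral is well defined — which holds because zeros of $G$ lying on the sphere contribute only logarithmic, hence integrable, singularities — and that the Green potential converges. Equally, matching the analytic description of the zero set of $G$ with the potential-theoretic Riesz measure (point masses with multiplicity when $d=1$, the integration current over the zero variety when $d\ge 2$) is where the genuine content lies, and these are exactly the points supplied by the potential-theoretic references invoked in the paper.
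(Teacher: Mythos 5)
Your proposal is correct and takes essentially the same route as the paper, which does not prove Theorem \ref{thm:poissonjensen1} at all but obtains it by citing precisely the ingredients you reconstruct: subharmonicity of $\log|G|$ (Ronkin), the classical Poisson--Jensen formula for $d=1$ (Ahlfors), and the Riesz/Green representation of subharmonic functions on balls from potential theory for $d\ge 2$ (Hayman). Your method-of-images Green's function for $B_r\subset\R^{2d}$, the Poisson-kernel boundary term, and the Blaschke-product factorization in the planar case are exactly the standard arguments behind those citations, so there is nothing to correct.
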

Since the formula of Poisson-Jensen takes different shapes depending on the dimension, in the following we will consider the cases $d=1$ and $d\ge 2$ seperately.
Note however that qualitatively Equations \eqref{eq:poissonjensen1} and \eqref{eq:poissonjensen2} are quite similar:
First of all, both the integral in \eqref{eq:poissonjensen1} and the first integral in \eqref{eq:poissonjensen2} express a weighted average of $\log |G|$ over the surface of a ball.
Secondly, the sum in \eqref{eq:poissonjensen1} can be rewritten as
$$
\sum_{k:\abs{z_k}<r} \log \abs{\frac{r^2-\overline{z_k}z}{r(z-z_k)}} = \int_{B_r} \log \abs{\frac{r^2-\overline{\xi}z}{r(z-\xi)}} ~d\mu(\xi),
$$
where $\mu:=\sum_k \delta_{z_k}$, and therefore is the integral over a function with singularity in $z$ w.r.t. a measure that is supported precisely on the zero set of $G$.
The second integral in \eqref{eq:poissonjensen2} can be interpreted similarly.
More generally, the measure $\mu_G$ is related to the distribution of the zeros of $G$.
As we will see next the distribution of zeros can be controlled in terms of the growth of $G$.
\begin{proposition}\label{lem:zerosandgrowth}
 Let $\alpha,\beta >0$ and suppose $G\in \Oab(\C^d)$ be such that $G\neq 0$.\\ 
 In case $d=1$ let $z_1,z_2,\ldots$ denote the zeros of $G$ repeated according to multiplicity, then for all $r>0$
		   $$\sharp\{k: \abs{z_k}<r\} \le \frac{2^\beta \alpha}{\log2} r^\beta.$$ 
 In case $d\ge2$ let $\mu_G$ be defined by \eqref{eq:poissonjensen2} and $\nu_G$ by $\nu_G(r):=\int_{B_r} \abs{z}^{-2d+2}~d\mu_G(z)$.
		    Then for all $r>0$
		     $$ \mu_G(B_r) \le \alpha 2^{\beta+1} r^{2d-2+\beta} \quad \text{and} \quad \nu_G(r)\le \alpha 2^{\beta+1} r^\beta.$$
\end{proposition}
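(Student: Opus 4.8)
The plan is to read both bounds directly off the Poisson–Jensen formula of Theorem \ref{thm:poissonjensen1} evaluated at the center $z=0$, combined with the growth hypothesis and a doubling argument in the radius. First I observe that $G\in\Oab(\C^d)$ together with $G\neq 0$ forces $G(0)\neq 0$: otherwise the defining inequality $M_G(r)\le |G(0)|e^{\alpha r^\beta}=0$ would hold for every $r$ and give $G\equiv 0$. Hence $\log|G(0)|$ is finite and every expression below is meaningful.

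For $d=1$, setting $z=0$ in \eqref{eq:poissonjensen1} collapses the Blaschke factors and yields Jensen's formula, namely $\sum_{|z_k|<r}\log(r/|z_k|)=\frac{1}{2\pi}\int_0^{2\pi}\log|G(re^{i\theta})|\,d\theta-\log|G(0)|$. The growth bound $\log|G(re^{i\theta})|\le\log|G(0)|+\alpha r^\beta$ shows the right-hand side is at most $\alpha r^\beta$. To convert this into a count of zeros I apply the identity at radius $2r$ and keep only the (nonnegative) terms with $|z_k|<r$, each of which satisfies $\log(2r/|z_k|)>\log 2$. This gives $(\log 2)\,\sharp\{k:|z_k|<r\}\le\alpha(2r)^\beta=\alpha 2^\beta r^\beta$, which is exactly the asserted bound.

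For $d\ge 2$ I evaluate \eqref{eq:poissonjensen2} at $z=0$. On $\partial B_r$ the Poisson kernel reduces to $r^{2-2d}$, so the surface term equals the spherical average of $\log|G|$ and is bounded above by $\log|G(0)|+\alpha r^\beta$; the potential kernel simplifies so that the mass term becomes $\int_{B_r}\bigl(|\xi|^{2-2d}-r^{2-2d}\bigr)\,d\mu_G(\xi)$, which is nonnegative since $|\xi|<r$ and the exponent $2-2d$ is negative. Rearranging the identity therefore gives $\int_{B_r}\bigl(|\xi|^{2-2d}-r^{2-2d}\bigr)\,d\mu_G(\xi)\le\alpha r^\beta$. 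Running this at radius $2r$ and restricting the (nonnegative) integrand to $B_r\subset B_{2r}$, I invoke the pointwise estimate $|\xi|^{2-2d}-(2r)^{2-2d}\ge(1-2^{2-2d})|\xi|^{2-2d}\ge\tfrac12|\xi|^{2-2d}$, valid on $B_r$ for $d\ge2$ because $|\xi|\le r$ makes $(2r)^{2-2d}=2^{2-2d}r^{2-2d}\le 2^{2-2d}|\xi|^{2-2d}$ while $2^{2-2d}\le\tfrac14$. This yields $\tfrac12\nu_G(r)\le\alpha(2r)^\beta$, hence $\nu_G(r)\le\alpha 2^{\beta+1}r^\beta$. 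Finally the elementary inequality $1=|\xi|^{2-2d}|\xi|^{2d-2}\le r^{2d-2}|\xi|^{2-2d}$ on $B_r$ gives $\mu_G(B_r)\le r^{2d-2}\nu_G(r)\le\alpha 2^{\beta+1}r^{2d-2+\beta}$.

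The routine parts are the kernel simplifications at $z=0$ and the two doubling steps. The main obstacle — and the only place where the hypothesis $d\ge2$ is genuinely used — is the pointwise kernel estimate $|\xi|^{2-2d}-(2r)^{2-2d}\ge\tfrac12|\xi|^{2-2d}$ on $B_r$, which is what lets one control the bare kernel $|\xi|^{2-2d}$ from below by the averaged potential appearing in Poisson–Jensen; keeping the constant $1-2^{2-2d}$ bounded away from zero is precisely what makes the comparison between $\nu_G(r)$ and the Poisson–Jensen mass quantitative and uniform in $G$. One should also verify carefully that the potential kernel in \eqref{eq:poissonjensen2} is nonnegative on $B_{2r}$, so that passing from $B_{2r}$ to $B_r$ only decreases the integral; this again follows from $|\xi|<2r$ and the negativity of $2-2d$.
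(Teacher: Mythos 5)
Your proposal is correct and follows essentially the same route as the paper: evaluate Poisson--Jensen at $z=0$, bound the spherical average of $\log|G|$ via the growth hypothesis, and use a doubling of the radius so that the kernel $|\xi|^{2-2d}-(2r)^{2-2d}$ (resp.\ $\log(2r/|z_k|)$) dominates a fixed multiple of $|\xi|^{2-2d}$ (resp.\ $\log 2$) on $B_r$. The only cosmetic differences are that you keep $\log|G(0)|$ explicit instead of normalizing $|G(0)|=1$, and you phrase the doubling as ``apply the identity at radius $2r$ and restrict to $B_r$'' rather than the paper's ``apply at radius $r$, restrict to $B_{r/2}$, then substitute $r\mapsto 2r$''; these are identical arguments.
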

\begin{proof}
 Note that $G\neq 0$ together with $G\in \Oab(\C^d)$ implies that $G(0)\neq 0$.
 Since both Equation \eqref{eq:poissonjensen1} and \eqref{eq:poissonjensen2} are invariant w.r.t. multiplication of $G$ by a non zero constant, we may assume w.l.o.g. that $|G(0)|=1$.\\

 We first prove the statement for $\mathbf{d=1}$.
 Since $\abs{G(0)}=1$ applying \eqref{eq:poissonjensen1} for $z=0$ yields 
 \begin{equation}\label{eq:jensen}
 \sum_{k:\abs{z_k}<r} \log \frac{r}{\abs{z_k}}=\frac{1}{2\pi} \int_0^{2\pi} \log \abs{G(r e^{i\theta})}~d\theta.
 \end{equation}
 Since for $\abs{z_k}<r/2$ it holds that $\log \frac{r}{\abs{z_k}}\ge \log2$ we can estimate
 \begin{eqnarray*}
  \sharp \{k: \abs{z_k}<r/2\} &\le & \frac1{\log2} \sum_{k:\abs{z_k}<r} \log \frac{r}{\abs{z_k}}\\
			     &= & \frac1{\log2} \frac{1}{2\pi} \int_0^{2\pi} \log \abs{G(r e^{i\theta})}~d\theta\\
			     &\le & \frac{\alpha}{\log2} r^\beta. 
 \end{eqnarray*}
 Substitution of $r$ by $2r$ concludes the proof of the first statement.\\
 
 We proceed with the case $\mathbf{d\ge2}$.
 To simplify notation we set $\mu:=\mu_G$ and $\nu:=\nu_G$.
 Applying Equation \eqref{eq:poissonjensen2} for $z=0$ gives 
 \begin{equation}\label{eq:jensenrearranged}
 \int_{B_r} \frac{1}{\abs{\xi}^{2d-2}} -\frac{1}{r^{2d-2}}~d\mu(\xi) = \frac{1}{S_{d-1} \cdot r^{2d-1}} \int_{\partial B_r} \log \abs{G(\xi)}~d\sigma(\xi).
\end{equation}
Since for $\abs{\xi}<r/2$ the inequality 
$$
\frac{1}{\abs{\xi}^{2d-2}} \le 2\cdot \left(\frac{1}{\abs{\xi}^{2d-2}}-\frac{1}{r^{2d-2}}\right)
$$
holds, we estimate by using \eqref{eq:jensenrearranged} that
\begin{eqnarray*}
 \nu(r/2) &=& \int_{B_{r/2}} \abs{\xi}^{-2d+2} ~d\mu(\xi)\\
	  &\le& \frac2{S_{d-1}\cdot r^{2d-1}} \int_{\partial B_r} \log \abs{G(\xi)}~d\sigma(\xi)\\
	  &\le& 2 \alpha r^\beta.
\end{eqnarray*}
Substition of $r$ by $2r$ yields that $$\nu(r)\le 2^{\beta+1}\alpha r^{\beta}.$$
Since 
$
\nu(r)\ge r^{-2d+2}\mu(B_r)
$
the second claim follows immediately, i.e.,
$$
\mu(B_r) \le 2^{\beta+1} \alpha r^{2d-2+\beta}.
$$
\end{proof}
Before we go on to prove Theorem \ref{thm:main} let us provide some intuition for the case $d=1$.
In that case the zero set of any entire function $G\neq 0$ is discrete.
Locally at a zero $z_0$ we can factorize
$$
G(z)=(z-z_0)^m\cdot H(z),
$$
where $m\in \N$ denotes the multiplicity of the zero $z_0$ and $H$ is a locally nonvanishing, analytic function.
Computing the logarithmic derivative of $G$ gives
$$
(\log G)'(z)=\frac{G'(z)}{G(z)} =\frac{m}{z-z_0} + (\log H)'(z),
$$
thus the logarithmic derivative has a pole of order $1$ at $z_0$.
Proposition \ref{lem:zerosandgrowth} allows us to control the number of zeros.
If we choose $p$ such that $z\mapsto \abs{z}^{-1}$ is $L^p$-integrable we will be able to bound $\norm{(\log G)'}_{L^p(B_r)}$.\\
For $d\ge2$ the situation is more complicated as zeros are not discrete any more.
Before we prove Theorem \ref{thm:main} we derive pointwise estimates, again by exploiting the representation formula of Poisson-Jensen.
\begin{proposition}\label{prop:ptwest}
 Let $\alpha, \beta>0$. 
 Suppose $G\in \Oab(\C^d)$ is such that $G\neq 0$.
 Then there exists a constant $c$ that only depends on $d$ such that for all $r>0$ and $\abs{z}<r/2$
 \begin{equation}\label{est:ptw}
  \abs{(\log G)'(z)} \le c \left( \alpha 2^\beta r^{\beta-1} + \int_{B_r} \frac{d\mu(\xi)}{\abs{z-\xi}^{2d-1}}\right),
 \end{equation}
 where $\mu:=\sum_k \delta_{z_k}$ -- where $z_k$ are the zeros of $G$ repeated according to multiplicity -- in case $d=1$ and 
 $\mu=\mu_G$ is defined by \eqref{eq:poissonjensen2} in case $d\ge 2$.
\end{proposition}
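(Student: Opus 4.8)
The plan is to reduce the vector-valued bound to a scalar one and then to differentiate the Poisson--Jensen representation of $\log\abs{G}$ termwise. Wherever $G(z)\neq 0$ the branch $\log G$ is holomorphic and $\log\abs{G}=\Re\log G$, so that for each $1\le j\le d$ one has $\frac{\partial}{\partial z_j}\log\abs{G}=\tfrac12\frac{\partial}{\partial z_j}\log G=\tfrac12\,(\partial_{z_j}G)/G$, whence $\abs{(\log G)'(z)}=2\bigl(\sum_{j}\abs{\frac{\partial}{\partial z_j}\log\abs{G}(z)}^2\bigr)^{1/2}$. It therefore suffices to bound each $\abs{\frac{\partial}{\partial z_j}\log\abs{G}(z)}$ by the right-hand side of \eqref{est:ptw}, the passage to the Euclidean norm over $j$ costing only a factor $\sqrt d$ absorbed into $c$. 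Since both $G'/G$ and the measure $\mu$ are invariant under $G\mapsto \lambda G$, and since \eqref{eq:poissonjensen1} and \eqref{eq:poissonjensen2} transform consistently under this scaling, I normalize $\abs{G(0)}=1$, so that $\log M_G(r)\le \alpha r^\beta$; one may also assume $G(z)\neq 0$, the estimate being trivial otherwise.

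Next I would apply $\frac{\partial}{\partial z_j}$ to \eqref{eq:poissonjensen1} (for $d=1$), respectively \eqref{eq:poissonjensen2} (for $d\ge2$), and split the result into a \emph{boundary term} and a \emph{mass term}. For $\abs{z}<r/2$ the boundary kernel is smooth, and since all denominators $\abs{z-\xi}$ with $\abs{\xi}=r$ satisfy $\abs{z-\xi}\ge r/2$, a direct computation of $\frac{\partial}{\partial z_j}$ of the Poisson kernel yields a kernel bounded in modulus by $c_d\,r^{1-2d}$. To control the resulting factor $\int_{\partial B_r}\abs{\log\abs{G}}$ I would use the identity $\abs{t}=2t^+-t$ together with $\log\abs{G}\le \log M_G(r)\le \alpha r^\beta$ and the fact that the spherical average of $\log\abs{G}$ is nonnegative; the latter follows by evaluating Poisson--Jensen at the center, cf. \eqref{eq:jensen} and \eqref{eq:jensenrearranged}, which forces $\int_{\partial B_r}\log\abs{G}\ge 0$. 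Combining these gives a boundary contribution $\lesssim_d \alpha r^{\beta-1}$.

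For the mass term I would differentiate $-\sum_k \log\abs{B_k}$ (for $d=1$), respectively $-\int_{B_r} g(z,\xi)\,d\mu_G(\xi)$ (for $d\ge2$), and separate a singular from a regular part. The singular part is $\sum_k (z-z_k)^{-1}$, respectively $-(d-1)\int_{B_r}\frac{\overline{z_j-\xi_j}}{\abs{z-\xi}^{2d}}\,d\mu_G(\xi)$, whose modulus is bounded by $\int_{B_r}\abs{z-\xi}^{-(2d-1)}\,d\mu(\xi)$ and accounts for the second summand in \eqref{est:ptw}. The regular part is the ``reflected'' correction: for $d=1$ it is $\sum_k \overline{z_k}/(r^2-\overline{z_k}z)$, and since $\abs{r^2-\overline{z_k}z}>r^2/2$ each term is $\le 2/r$, so summing over the $\le \tfrac{2^\beta\alpha}{\log2}r^\beta$ zeros supplied by Proposition \ref{lem:zerosandgrowth} gives $\lesssim \alpha 2^\beta r^{\beta-1}$. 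For $d\ge2$ the reflected point $\xi r^2/\abs{\xi}^2$ lies outside $B_r$, so $\abs{z-\xi r^2/\abs{\xi}^2}>r/2$ and the $z_j$-derivative of that kernel is bounded by $c_d\, r^{2d-2}\abs{\xi}^{-(2d-2)}r^{-(2d-1)}$; integrating against $d\mu_G$ produces the weight $\abs{\xi}^{-(2d-2)}$, controlled by $\nu_G(r)\le \alpha 2^{\beta+1}r^\beta$ from Proposition \ref{lem:zerosandgrowth}, again yielding $\lesssim_d \alpha 2^\beta r^{\beta-1}$.

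The step I expect to be most delicate is justifying the termwise differentiation, specifically passing $\frac{\partial}{\partial z_j}$ under the integral against $\mu_G$ near the diagonal $\xi=z$, where the differentiated kernel acquires the borderline singularity $\abs{z-\xi}^{-(2d-1)}$. I would dispose of this by noting that \eqref{est:ptw} is vacuous whenever the singular integral diverges, and that otherwise the interchange is legitimate by dominated convergence on $B_r\setminus B_\delta(z)$, the contribution of the shrinking ball $B_\delta(z)$ being controlled via the local $\mu_G$-mass estimates of Proposition \ref{lem:zerosandgrowth} as $\delta\to0$. Adding the boundary and mass contributions and taking the Euclidean norm over $j$ then produces the asserted pointwise bound with a constant depending only on $d$.
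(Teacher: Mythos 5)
Your proposal follows essentially the same route as the paper's proof: differentiate the Poisson--Jensen representation via Wirtinger derivatives (so the antiholomorphic terms vanish), bound the boundary term using the kernel estimate $\lesssim_d r^{1-2d}$ together with the nonnegativity of the spherical mean of $\log\abs{G}$ and $\log M_G(r)\le\alpha r^\beta$, and bound the mass term by the singular integral plus a reflected contribution controlled through Proposition \ref{lem:zerosandgrowth}. The only deviations are cosmetic --- in $d=1$ you split the zero sum into singular and reflected parts and invoke the zero count, whereas the paper bounds the combined fraction $\frac{r^2-\abs{z_k}^2}{(z-z_k)(r^2-\overline{z_k}z)}$ directly by $2\abs{z-z_k}^{-1}$, and your explicit justification of differentiating under the integral fills in a step the paper passes over silently.
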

\begin{proof}
 The assumption that $G\in \Oab(\C^d)$ is not the zero function implies that $G(0)\neq 0$. Since the logarithmic derivative is invariant w.r.t. multiplication by a nonzero constant we may assume that 
 $G(0)=1$.
 Again the cases $d=1$ and $d\ge 2$ are treated seperately:\\

 \bm{$d=1:$}
 First we compute $\log G(z)$ for $\abs{z}<r$ using \eqref{eq:poissonjensen1}.
 \begin{equation}
 \begin{split}
  \log G(z) &= 2 \log \abs{G(z)} -\log \overline{G(z)}\\
	    &= \frac1\pi \int_0^{2\pi} \log \abs{G(re^{i\theta})} \cdot 
	    \frac12\left( 
	    \frac{re^{i\theta}+z}{re^{i\theta}-z} + \frac{re^{-i\theta}+\overline{z}}{re^{-i\theta}-\overline{z}} 
	    \right)d\theta\\
	    &~-\sum_{k:\abs{z_k}<r} \left(\log \frac{r^2-\overline{z_k}z}{r(z-z_k)} + \log \frac{r^2-z_k\overline{z}}{r(\overline{z}-\overline{z_k})}\right) - \log \overline{G(z)}
 \end{split}
 \end{equation}
Next we differentiate w.r.t. $z$. The antiholomorphic terms are annihilated by $\frac\partial{\partial z}$, i.e.,
$$
\frac\partial{\partial z} \left(\frac{re^{-i\theta}+\overline{z}}{re^{-i\theta}-\overline{z}}\right)=0,
\quad \frac\partial{\partial z} \left(\log \frac{r^2-z_k\overline{z}}{r(\overline{z}-\overline{z_k})}\right)= 0
\quad \text{and} \quad \frac\partial{\partial z}\log \overline{G(z)}=0.
$$
Elementary computations show that 
$$
\frac\partial{\partial z} \left(\frac12 \frac{re^{i\theta}+z}{re^{i\theta}-z}\right) = \frac{re^{i\theta}}{(re^{i\theta}-z)^2} \quad 
\text{and} \quad \frac\partial{\partial z} \left(\log \frac{r^2-\overline{z_k}z}{r(z-z_k)}\right)=\frac{\abs{z_k}^2-r^2}{(z-z_k)(r^2-\overline{z_k}z)}
$$
and thus
\begin{equation}\label{eq:logderiv11}
 \left(\log G\right)'(z) = \frac1\pi \int_0^{2\pi} \log \abs{G(re^{i\theta})} \frac{re^{i\theta}}{(re^{i\theta}-z)^2} d\theta + \sum_{k:\abs{z_k}<r} \frac{r^2-\abs{z_k}^2}{(z-z_k)(r^2-\overline{z_k}z)} =:I(z) + II(z)
\end{equation}
We will now estimate $\abs{(\log G)'(z)}$ for $\abs{z}<r/2$.
We treat $I$ and $II$ seperately.
\begin{itemize}
 \item[Estimating $\abs{I(z)}$:] Note that applying Cauchy's integral formula on the function $z\mapsto z$ yields
			         \begin{equation*}\label{eq:vanishingint}
			           \int_0^{2\pi} \frac{re^{i\theta}}{(z-re^{i\theta})^2} ~d\theta =0 \quad \text{for all }~z \in B_r.
			         \end{equation*}
				 Therefore we have
				 $$
				 I(z)=\frac1\pi \int_0^{2\pi} (\log \abs{G(re^{i\theta})}-\log \abs{M_G(r)}) \frac{re^{i\theta}}{(z-re^{i\theta})^2} ~d\theta
			         $$
			         and furthermore for $\abs{z}<r/2$
			         \begin{eqnarray*}
			          \abs{I(z)} &\le & \frac1\pi \int_0^{2\pi} \abs{\log \abs{G(re^{i\theta})}-\log M_G(r)} \cdot \abs{\frac{re^{i\theta}}{(z-re^{i\theta})^2}} d\theta\\
			                     &\le & \frac4{\pi r} \int_0^{2\pi} \log M_G(r)-\log \abs{G(re^{i\theta})} d\theta
			         \end{eqnarray*}
				 due to $\abs{G(re^{i\theta})} \le M_G(r)$.
				 By having a look at Equation \eqref{eq:jensen} we observe that $\int_0^{2\pi} \log \abs{G(re^{i\theta})} d\theta$ is nonnegative and therefore
				 \begin{equation}\label{est:I1final}
				  \abs{I(z)} \le \frac{4\alpha}{\pi} r^{\beta-1} \quad \text{for } \abs{z}<r/2.
				 \end{equation}

 \item[Estimating $\abs{II(z)}$:] For any $\abs{z}<r/2$ and $\abs{z_k}<r$ we have 
				  $$
				  \abs{r^2-\abs{z_k}^2} \le r^2 \quad\text{and} \quad \abs{r^2-\overline{z_k}z}\ge r^2-\abs{z_k}\abs{z}\ge r^2/2;
				  $$
				  making use of these estimates yields
				  \begin{equation}\label{est:I2final}
				   \abs{II(z)} \le \sum_{k:\abs{z_k}<r} \abs{\frac{r^2-\abs{z_k}^2}{(z-z_k)(r^2-\overline{z_k}z)}}\\
					       \le 2 \sum_{k:\abs{z_k}<r} \abs{z-z_k}^{-1}
				  \end{equation}		  
\end{itemize}
Combining \eqref{est:I1final} and \eqref{est:I2final} implies \eqref{est:ptw} for $d=1$.\\

\bm{$d\ge2:$}
Let $\nu:=\nu_G$ be defined as in Proposition \ref{lem:zerosandgrowth}. Then by utilizing Equation \eqref{eq:poissonjensen2} we know that
\begin{equation}\label{eq:logG}
 \log G(z)=-\log \overline{G(z)} + \frac2{S_{d-1} r}\int_{\partial B_r} \log \abs{G(\xi)} \cdot h(z,\xi)~d\sigma(\xi)-2\int_{B_r} k(z,\xi)~d\mu(\xi)
\end{equation}
for all $\abs{z}<r$, where
$$
h(z,\xi)=\frac{r^2-\abs{z}^2}{\abs{z-\xi}^{2d}}
$$
and
$$
k(z,\xi)=\frac1{\abs{z-\xi}^{2d-2}} - \left(
\frac{r}{\abs{\xi}\cdot \abs{z-\frac{\xi r^2}{\abs{\xi}^2}}}
\right)^{2d-2}
$$
We differentiate Equation \eqref{eq:logG} w.r.t. the first component $z_1$ of $z$ (differentation w.r.t. the other variables works in the exact same way).
Interchanging order of integration and differentiation yields -- since $\log \overline{G}$ is antiholomorphic w.r.t. $z_1$ -- that
\begin{equation}\label{eq:logderivG}
\begin{split}
 \frac{\partial}{\partial z_1}\log G(z) &= \frac2{S_{d-1} r}\int_{\partial B_r} \log \abs{G(\xi)} 
 \cdot \frac{\partial}{\partial z_1} h(z,\xi)~d\sigma(\xi)-2\int_{B_r} \frac{\partial}{\partial z_1} k(z,\xi)~d\mu(\xi)\\
					&=: III(z)+IV(z).
 \end{split}
 \end{equation}
To compute the derivative of the kernel function $h$ we write
$$
h(z,\xi)=\frac{r^2-z_1\overline{z_1}-\abs{z'}^2}{\left((z_1-\xi_1)(\overline{z_1-\xi_1})+\abs{z'-\xi'}^2\right)^d},
$$
where $z=(z_1,z')$ and $z'\in \C^{d-1}$ and similarly for $\xi$.
Then 
\begin{equation}\label{eq:derivh}
 \begin{split}
 \frac{\partial}{\partial z_1} h(z,\xi) &= \frac{-\overline{z_1}\abs{z-\xi}^{2d}-(r^2-\abs{z}^2)d\abs{z-\xi}^{2(d-1)}(\overline{z_1-\xi_1})}{\abs{z-\xi}^{4d}}\\
					 &= \abs{z-\xi}^{-2d-2} \left(-\overline{z_1}\abs{z-\xi}^2-d(r^2-\abs{z}^2)(\overline{z_1-\xi_1})\right)
\end{split}
\end{equation}
where we used that $z_1\mapsto \overline{z_1}$ is antiholomorphic.
A similar computation yields
\begin{equation}\label{eq:derivk}
 \frac{\partial}{\partial z_1} k(z,\xi) = (d-1)\cdot \left( \frac{r^{2d-2}}{\abs{\xi}^{2d-2}}\cdot 
 \frac{\overline{z_1-\hat{\xi}_1}}{\abs{z-\hat{\xi}}^{2d}}-\frac{\overline{z_1-\xi_1}}{\abs{z-\xi}^{2d}}
 \right)
\end{equation}
where we set $\hat{\xi}=\xi r^2/\abs{\xi}^2$.
Next we derive bounds for $\abs{III(z)}$ and $\abs{IV(z)}$ for $\abs{z}<r/2$.
\begin{itemize}
 \item[Estimating $\abs{III(z)}$:] 
 For $\abs{\xi}=r$ we can now estimate
 \begin{equation*}
   \abs{\frac{\partial}{\partial z_1} h(z,\xi)} %\le \left(\frac{r}{2}\right)^{-2d-2} \cdot (r(\frac32 r)^2+dr^2\frac32 r)
						\lesssim_d r^{-2d+1},
 \end{equation*}
 where the symbol $"\lesssim_d"$ means that the left hand side can be bounded by the right hand side times a constant that depends on $d$ only.
 Thus 
\begin{eqnarray*}
\abs{III(z)} &\le &  \frac2{S_{d-1} \cdot r}\int_{\partial B_r} \abs{\log \abs{G(\xi)} } \cdot \abs{\frac{\partial}{\partial z_1} h(z,\xi)}~d\sigma(\xi)\\
	     &\lesssim_d& r^{-2d} \int_{\partial B_r} \abs{\log \abs{G(\xi)}} ~d\sigma(\xi)\\
	     &\le& r^{-2d} \left( \int_{\partial B_r} \abs{\log M_G(r)-\log \abs{G(\xi)}}  ~d\sigma(\xi) + \int_{\partial B_r} \abs{\log M_G(r)}  ~d\sigma(\xi) \right)\\
	     &=& r^{-2d} \left( \int_{\partial B_r} \log M_G(r)-\log \abs{G(\xi)}  ~d\sigma(\xi) + \int_{\partial B_r} \abs{\log M_G(r)}  ~d\sigma(\xi) \right)
\end{eqnarray*}
From \eqref{eq:jensenrearranged} it follows that $\int_{\partial B_r} \log \abs{G(\xi)} d\sigma(\xi)$ is nonnegative.
Since $G$ is holomorphic and $G(0)=1$ we have $\abs{\log M_G(r)} = \log M_G(r)$ for all $r$.
Therefore we can further estimate
\begin{equation*}
 \abs{III(z)} \lesssim_d  r^{-2d} \int_{\partial B_r} \log M_G(r) ~d\sigma(\xi)
	       \lesssim_d  r^{-2d} r^{2d-1} \alpha r^{\beta} 
\end{equation*}
and thus 
\begin{equation}\label{est:IIIptw}
 \abs{III(z)} \lesssim_d \alpha r^{\beta-1}\quad \text{for } \abs{z}<r/2.
\end{equation}
 \item[Estimating $\abs{IV(z)}$:]
 First note that $\abs{\hat{\xi}}>r$ whenever $\abs{\xi}<r$. For $\abs{z}<r/2$ we estimate that
 $$
 \abs{\frac{\partial}{\partial z_1} k(z,\xi)} \lesssim_d \frac{r^{2d-2}}{\abs{\xi}^{2d-2}}\cdot r^{-2d+1}+\frac1{\abs{z-\xi}^{2d-1}}
 $$
 Therefore 
 \begin{eqnarray*}
  \abs{IV(z)} &\le& 2\int_{B_r} \abs{ \frac{\partial}{\partial z_1} k(z,\xi)}~d\mu(\xi)\\
              &\lesssim_d& r^{-1} \int_{B_r} \abs{\xi}^{-2d+2}~d\mu(\xi)+\int_{B_r} \frac{d\mu(\xi)}{\abs{z-\xi}^{2d-1}}\\
              &=&r^{-1}\cdot \nu(r) + \int_{B_r} \frac{d\mu(\xi)}{\abs{z-\xi}^{2d-1}},
 \end{eqnarray*}
 where we set $\nu(r):=\int_{B_r} \abs{\xi}^{-2d+2}d\mu(\xi)$.
By Proposition \ref{lem:zerosandgrowth} it holds that
\begin{equation}\label{est:IVptw} 
 \abs{IV(z)} \lesssim_d \alpha 2^\beta r^{\beta-1} + \int_{B_r} \frac{d\mu(\xi)}{\abs{z-\xi}^{2d-1}}\quad \text{for } \abs{z}<r/2.
\end{equation}
\end{itemize}
Combining \eqref{est:IIIptw} and \eqref{est:IVptw} yields \eqref{est:ptw} for $d\ge 2$.
\end{proof}
We are set to prove Theorem \ref{thm:main}.
\begin{proof}[Proof of Theorem \ref{thm:main}]
 Let $\mu$ be defined as in Proposition \ref{prop:ptwest}, then the pointwise estimate
 \begin{equation}\label{logderivG:ptwagain}
   \abs{(\log G)'(z)} \le c \left( \alpha 2^\beta r^{\beta-1} + \int_{B_r} \frac{d\mu(\xi)}{\abs{z-\xi}^{2d-1}}\right)
 \end{equation}
 holds for $z\in B_{r/2}$, where $c$ only depends on $d$.
 We begin by bounding the norm of the second term on the right hand side:
 W.l.o.g. we may assume that $\mu(B_r)>0$, otherwise there is nothing to estimate.
 \begin{eqnarray*}
  \norm{z\mapsto  \int_{B_r} \frac{d\mu(\xi)}{\abs{z-\xi}^{2d-1}}}_{L^p(B_{r/2})}^p &=& \int_{B_{r/2}} \left(  \int_{B_r} \frac{d\mu(\xi)}{\abs{z-\xi}^{2d-1}} \right)^p dA(z)\\
											 &=& \int_{B_{r/2}} \left(  \int_{B_r} \frac{\mu(B_r)}{\abs{z-\xi}^{2d-1}} \cdot \frac{d\mu(\xi)}{\mu(B_r)} \right)^p dA(z)\\
											 &\le& \mu(B_r)^{p-1} \int_{B_{r/2}} \int_{B_r}\frac{d\mu(\xi)}{\abs{z-\xi}^{(2d-1)p}} dA(z)
 \end{eqnarray*}
where we used Jensen's inequality. By interchanging order of integration we obtain
$$
 \norm{z\mapsto  \int_{B_r} \frac{d\mu(\xi)}{\abs{z-\xi}^{2d-1}}}_{L^p(B_{r/2})}^p \le \mu(B_r)^{p-1} \int_{B_r} \int_{B_{r/2}}\frac{dA(z)}{\abs{z-\xi}^{(2d-1)p}} d\mu(\xi).
$$
The inner integral can be bounded by
$$
\int_{B_{r/2}}\frac{dA(z)}{\abs{z-\xi}^{(2d-1)p}} \le \int_{B_{r/2}}\frac{dA(z)}{\abs{z}^{(2d-1)p}} = \norm{z\mapsto \abs{z}^{-2d+1}}_{L_p(B_{r/2})}^p=:c_{d,p}^p.
$$
Note that $p<1+1/(2d-1)$ is precisely the condition for $c_{d,p}$ to be finite.
Thus we arrive at
\begin{equation}\label{est:2ndsummand}
\norm{z\mapsto  \int_{B_r} \frac{d\mu(\xi)}{\abs{z-\xi}^{2d-1}}}_{L^s(B_{r/2})}^p \le c_{d,p}^p \mu(B_r)^p.
\end{equation}
To estimate the first summand of the right hand side in \eqref{logderivG:ptwagain} we compute the norm of the constant function
\begin{equation}\label{est:1stsummand}
 \| 1\|_{L^p(B_{r/2})} = vol(B_{r/2})^{1/p} = \left( \frac{\pi^d}{d!} (r/2)^{2d}\right)^{1/p}.
\end{equation}
It follows from \eqref{logderivG:ptwagain}, \eqref{est:1stsummand}, \eqref{est:2ndsummand} and Proposition \ref{lem:zerosandgrowth} that there exists a $c'$ that only depends on $d$ and $p$ such that 
\begin{eqnarray*}
 \norm{(\log G)'}_{L^p(B_{r/2})} &\le & c' \left( \alpha2^\beta r^{\beta-1} \|1\|_{L^p(B_{r/2})} + \mu(B_r) \right)\\
				    & \le & c'' \left( \alpha2^\beta r^{\beta-1} \cdot (r/2)^{2d/p} +\alpha 2^{\beta+1}r^{2d-2+\beta}\right)\\
				    &\le & c''' \alpha 2^{\beta+1}r^{2d+\beta-1},
\end{eqnarray*}
where $c''$ and $c'''$ again only depend on $d$ and $p$.
By subtituting $r$ by $2r$ we get the desired bound
$$
\norm{(\log G)'}_{L^p(B_r)} \le c''' \alpha 2^{2d+2\beta} r^{2d+\beta-1}.
$$
\end{proof}

\section{Stable Gabor phase retrieval}
\subsection{The main result}
In the present section we will elaborate on how the results from the previous two sections enable us to derive stability estimates for the problem of phase retrieval from Gabor magnitudes.
The Gabor transform $\mathcal{G}f$ possesses the pleasant property that it is an entire function (up to an exponential factor and a reflection) and therefore the tools we developed thus far can be applied.
\begin{lemma}\label{lem:gaborentire}
 Let $\eta(z)=e^{\pi|z|^2/2-\pi i x\cdot y}$.
 Then we have that for any $f\in \mathcal{S}'(\R^d)$ the function $z\mapsto \mathcal{G}f(\bar{z})\eta(z)$ is entire.
\end{lemma}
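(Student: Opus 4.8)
The plan is to first reduce the claim to an explicit Gaussian integral identity and then invoke the standard fact that the Bargmann transform is entire. Under the identification $\C^d\cong\R^{2d}$ the point $\bar z$ corresponds to $(x,-y)$, so $\mathcal{G}f(\bar z)=\mathcal{G}f(x,-y)=\langle f,\psi_z\rangle$ with $\psi_z(t)=e^{-\pi|t-x|^2}e^{2\pi i t\cdot y}$, the pairing being the bilinear duality between $\mathcal{S}'(\R^d)$ and $\mathcal{S}(\R^d)$. Since $\eta(z)$ is a scalar I would pull it inside the pairing and collect exponents. The computational core is the identity
\[
-\pi|t-x|^2+2\pi i\,t\cdot y+\tfrac{\pi}{2}|z|^2-\pi i\,x\cdot y=-\pi|t|^2+2\pi\,t\cdot z-\tfrac{\pi}{2}\,z\cdot z,
\]
which follows from expanding $|t-x|^2$, writing $x+iy=z$ in the linear term, and using $z\cdot z=\sum_j z_j^2=|x|^2-|y|^2+2i\,x\cdot y$ in the constant term (note $z\cdot z$ is the bilinear form, not $|z|^2$). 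This yields the clean representation
\[
G(z):=\mathcal{G}f(\bar z)\,\eta(z)=\big\langle f,\ \Phi_z\big\rangle,\qquad \Phi_z(t)=e^{-\pi|t|^2+2\pi\,t\cdot z-\frac{\pi}{2}z\cdot z},
\]
and one checks that $\Phi_z\in\mathcal{S}(\R^d)$ for every $z\in\C^d$; indeed $G$ is, up to normalization, the Bargmann transform of $f$.

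To conclude that $G$ is entire I would show that the map $\C^d\ni z\mapsto\Phi_z\in\mathcal{S}(\R^d)$ is holomorphic in the Fr\'echet-space sense, i.e. that for each $j$ the difference quotients $h^{-1}(\Phi_{z+he_j}-\Phi_z)$ converge, as $h\to0$ in $\C$, to $\partial_{z_j}\Phi_z$ in every Schwartz seminorm. Granting this, composition with the continuous linear functional $f\in\mathcal{S}'(\R^d)$ shows that each difference quotient of $G$ converges, so $G$ is complex-differentiable in each variable separately and hence jointly holomorphic by Hartogs' theorem on separate analyticity, i.e.\ entire. For $f\in L^2(\R^d)$ or $f\in\mathcal{S}(\R^d)$ one can bypass the vector-valued language entirely: write $G(z)=e^{-\frac{\pi}{2}z\cdot z}\int_{\R^d}f(t)\,e^{-\pi|t|^2}\,e^{2\pi t\cdot z}\,dt$, observe that the integrand is entire in $z$ for each fixed $t$, and justify differentiation under the integral (or holomorphy in each variable via Morera's theorem together with Fubini) using the Gaussian factor $e^{-\pi|t|^2}$ to dominate $|f(t)|\,e^{2\pi|t|\,|{\rm Re}\,z|}$ uniformly for $z$ in compact sets.

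The main technical point is the Schwartz-valued holomorphy in the distributional case: one must produce, for every pair of multi-indices $\alpha,\gamma$, a bound on $\sup_t\big|t^\alpha\partial_t^\gamma\big[h^{-1}(\Phi_{z+he_j}-\Phi_z)-\partial_{z_j}\Phi_z\big](t)\big|$ that tends to $0$ as $h\to0$, uniformly enough to beat the polynomial weights. This is exactly where the Gaussian $e^{-\pi|t|^2}$ does the work: differentiating $\Phi_z$ in $t$ only brings down polynomials in $t$ and $z$, all of which are absorbed by the Gaussian, and the first-order Taylor remainder of $\zeta\mapsto e^{2\pi t\cdot\zeta}$ at $\zeta=z$ supplies an extra factor of $|h|$ together with a locally bounded polynomial-times-Gaussian majorant. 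I expect this estimate to be routine but slightly tedious; once it is in place, the continuity of $f$ on $\mathcal{S}(\R^d)$ and Hartogs' theorem finish the proof.
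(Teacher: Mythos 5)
Your proposal is correct, but it takes a genuinely different route from the paper: the paper's entire ``proof'' is a one-line citation to Gr\"ochenig's book, noting that a proof \emph{for functions of polynomial growth} can be found there, whereas you give a self-contained argument. Your computational core checks out: expanding $-\pi|t-x|^2+2\pi i\,t\cdot y+\tfrac{\pi}{2}|z|^2-\pi i\,x\cdot y$ does collapse to $-\pi|t|^2+2\pi\,t\cdot z-\tfrac{\pi}{2}\,z\cdot z$ (using $z\cdot z=|x|^2-|y|^2+2i\,x\cdot y$), so $\mathcal{G}f(\bar z)\eta(z)=\langle f,\Phi_z\rangle$ is indeed the (unnormalized) Bargmann transform, which is exactly the identity underlying the cited reference. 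What your approach buys is generality matching the lemma as actually stated: the lemma claims entirety for \emph{all} $f\in\mathcal{S}'(\R^d)$, while the citation -- by the paper's own phrasing -- only covers distributions of polynomial growth, so strictly speaking the paper leaves a (minor, folklore) gap that your argument closes. The mechanism you use for this, holomorphy of $z\mapsto\Phi_z$ as a map into the Fr\'echet space $\mathcal{S}(\R^d)$, followed by composition with the continuous functional $f$ and Hartogs' theorem on separate analyticity, is the standard and correct way to do it; the price is the seminorm estimate on the Taylor remainder of $\zeta\mapsto e^{2\pi t\cdot\zeta}$, which you only sketch, but it is routine precisely for the reason you give (the factor $e^{-\pi|t|^2}$ dominates every polynomial in $t$ and $z$ locally uniformly in $z$). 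Your remark that for $f\in L^2(\R^d)$ or $f\in\mathcal{S}(\R^d)$ one can instead argue by Morera/Fubini with the Gaussian majorant is also correct and is essentially the proof in the cited textbook.
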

\begin{proof}
 A proof for functions of polynomial growth can be found in \cite{grochenig}.
\end{proof}
Due to Lemma \ref{lem:gaborentire} we can apply Corollary \ref{cor:stabcheeger} to $F_1(z)=\mathcal{G}f(\bar{z})$ and $F_2(z)=\mathcal{G}g(\bar{z})$ to obtain the following result.
\begin{corollary}\label{cor:firststabgabor}
 Let $\Omega\subset \R^{2d}$ be a domain. Then for all $f,g\in \mathcal{M}^p(\R^d)$ it holds that
 \begin{multline*}
  \inf_{|c|=1} \|\mathcal{G}g-c\mathcal{G}f\|_{L^p(\Omega)} \le \||\mathcal{G}g|-|\mathcal{G}f|\|_{L^p(\Omega)}\\ + 2^{9/2}\cdot h(|\mathcal{G}f|^p,\Omega)^{-1} \left( \|\nabla |\mathcal{G}f|-\nabla |\mathcal{G}g|\|_{L^p(\Omega)} 
  + \left\| \frac{\nabla |\mathcal{G}f|}{|\mathcal{G}f|} \left(|\mathcal{G}f|-|\mathcal{G}g|\right) \right\|_{L^p(\Omega)} \right).
 \end{multline*}
\end{corollary}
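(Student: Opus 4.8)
The plan is to obtain the estimate as a direct application of Corollary~\ref{cor:stabcheeger} to the pair of functions $F_1(z)=\mathcal{G}f(\bar z)$ and $F_2(z)=\mathcal{G}g(\bar z)$, after transporting the resulting inequality from the reflected domain back to $\Omega$. Throughout I work under the standing restriction $1\le p\le 2$ inherited from Corollary~\ref{cor:stabcheeger}, and I identify $\Cd$ with $\R^{2d}$ as in the preliminaries, writing $\Omega^\ast:=\{z\in\Cd:\ \bar z\in\Omega\}$ for the reflection of $\Omega$ in the $y$-coordinates.

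First I would verify the hypotheses of Corollary~\ref{cor:stabcheeger} on the domain $\Omega^\ast$. The integrability $F_1,F_2\in L^p(\Omega^\ast)$ is immediate: since $f,g\in\mathcal{M}^p(\R^d)$ we have $\mathcal{G}f,\mathcal{G}g\in L^p(\R^{2d})$, and $z\mapsto\bar z$ is a Euclidean isometry with unit Jacobian, so precomposition with it preserves both $L^p$-membership and the $L^p$-norm. The continuity of $|F_1|$ follows from the continuity of $\mathcal{G}f$, which holds for every tempered distribution because the window $(x,y)\mapsto e^{-\pi|\cdot-x|^2}e^{-2\pi i(\cdot)\cdot y}$ depends continuously on $(x,y)$ in $\mathcal{S}(\R^d)$.

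The only genuinely structural point is the meromorphy of the quotient $F_2/F_1$, and here Lemma~\ref{lem:gaborentire} does the work. Writing $\eta(z)=e^{\pi|z|^2/2-\pi i x\cdot y}$, the lemma tells us that $H_f(z):=\mathcal{G}f(\bar z)\eta(z)$ and $H_g(z):=\mathcal{G}g(\bar z)\eta(z)$ are entire on $\Cd$. Crucially, although $F_1$ and $F_2$ are themselves \emph{not} holomorphic — the factor $\eta$ carries the non-holomorphic term $|z|^2$ — this offending factor is common to both and cancels in the quotient:
\[
\frac{F_2(z)}{F_1(z)}=\frac{\mathcal{G}g(\bar z)}{\mathcal{G}f(\bar z)}=\frac{H_g(z)}{H_f(z)},
\]
which is a ratio of entire functions and hence meromorphic on $\Cd$. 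This is precisely the hypothesis needed to invoke Corollary~\ref{cor:stabcheeger}.

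Applying Corollary~\ref{cor:stabcheeger} on $\Omega^\ast$ then yields the asserted inequality with $F_1,F_2$ and $\Omega^\ast$ in place of $\mathcal{G}g,\mathcal{G}f$ and $\Omega$. The final step is bookkeeping: I would push every term back to $\Omega$ through the change of variables $w=\bar z$. Because $z\mapsto\bar z$ is a measure-preserving isometry, each $L^p(\Omega^\ast)$-norm equals the corresponding $L^p(\Omega)$-norm of the un-reflected function; the reflection merely permutes and sign-flips the partial derivatives, hence preserves the Euclidean length of the gradient and thereby all gradient-norm and logarithmic-derivative terms; and since the Cheeger constant of Definition~\ref{def:cheegerconst} is formed from ratios of surface and volume integrals that are likewise isometry-invariant, $h(|F_1|^p,\Omega^\ast)=h(|\mathcal{G}f|^p,\Omega)$. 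Collecting these identities turns the inequality on $\Omega^\ast$ into the claimed estimate on $\Omega$. I expect no real obstacle beyond the cancellation observation above; the remaining work is the routine verification that the reflection leaves every quantity in the estimate invariant.
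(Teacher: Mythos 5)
Your proposal is correct and takes exactly the paper's route: the paper likewise derives the corollary by applying Corollary~\ref{cor:stabcheeger} to $F_1(z)=\mathcal{G}f(\bar z)$ and $F_2(z)=\mathcal{G}g(\bar z)$, with Lemma~\ref{lem:gaborentire} supplying meromorphy of $F_2/F_1$ because the non-holomorphic factor $\eta$ cancels in the quotient. In fact your write-up makes explicit several points the paper's one-line proof leaves implicit (the cancellation of $\eta$, the reflection bookkeeping, the continuity of $|\mathcal{G}f|$, and the standing restriction $1\le p\le 2$ inherited from Corollary~\ref{cor:stabcheeger}), all handled correctly.
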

Except for the logarithmic derivative Corollary \ref{cor:firststabgabor} already gives an estimate of the desired form.
The following proposition aims at absorbing the logarithmic derivative in a polynomial weight which does not depend on $f$.
\begin{proposition}\label{prop:absorblogderiv}
 Let $\Omega\subset \R^{2d}$ and let $1\le p <2d/(2d-1)$ and $q>p/(1-p\frac{2d-1}{2d})$.
 Suppose that $f\in \mathcal{M}^{\infty}(\R^d)$ such that $|\mathcal{G}f|$ has a global maximum at $z_0$.
 Then there exists a constant $c$ which only depends on $d,p$ and $q$ such that for all measurable functions $H$ it holds that
 $$
 \left\| \frac{\nabla |\mathcal{G}f|}{|\mathcal{G}f|} H \right\|_{L^p(\Omega)} \le c \left\| (1+|\cdot-z_0|)^{2d+2} H\right\|_{L^q(\Omega)}.
 $$
\end{proposition}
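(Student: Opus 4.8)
The plan is to first reduce to the case $z_0=0$, then to express the logarithmic derivative of $|\mathcal{G}f|$ through the logarithmic derivative of an entire function to which Theorem \ref{thm:main} applies, and finally to absorb the resulting local bounds into the polynomial weight by a Hölder/dyadic argument. I will assume $f\neq 0$, the case $f=0$ being trivial.

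\textbf{Reduction and the entire function.} A time-frequency shift of $f$ translates $|\mathcal{G}f|$ and leaves $\mathcal{M}^\infty(\R^d)$ invariant; moreover, under the substitution $u=v+z_0$ the asserted inequality for $f$ turns into the same inequality for the shifted data, on the shifted domain $\Omega-z_0$, with weight $(1+|v|)^{2d+2}$ centered at the origin. Hence I would assume without loss of generality that $z_0=0$, so that $M:=|\mathcal{G}f(0)|=\|\mathcal{G}f\|_{L^\infty(\R^{2d})}$. By Lemma \ref{lem:gaborentire} the function $G(z):=\mathcal{G}f(\overline z)\eta(z)$ is entire, and since $|\eta(z)|=e^{\pi|z|^2/2}$ one gets $|\mathcal{G}f(u)|=|G(\overline u)|e^{-\pi|u|^2/2}$ together with $|G(z)|\le M e^{\pi|z|^2/2}=|G(0)|e^{\pi|z|^2/2}$. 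Thus $G\in\mathcal{O}_\pi^2(\Cd)$ (i.e. $\alpha=\pi$, $\beta=2$ in Definition \ref{def:Oab}) and $G\neq 0$.

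\textbf{Pointwise bound on the logarithmic derivative.} From $\log|\mathcal{G}f(u)|=\log|G(\overline u)|-\tfrac\pi2|u|^2$ I would differentiate to obtain $\tfrac{\nabla|\mathcal{G}f|}{|\mathcal{G}f|}(u)=\nabla_u\log|G(\overline u)|-\pi u$. Since conjugation only flips the signs of the imaginary coordinates, $|\nabla_u\log|G(\overline u)||=|(\nabla\log|G|)(\overline u)|$, and Lemma \ref{lem:keylemma} applied to the holomorphic $G$ gives $|\nabla\log|G||=|\nabla|G||/|G|=|G'/G|=|(\log G)'|$. This yields the pointwise estimate $\bigl|\tfrac{\nabla|\mathcal{G}f|}{|\mathcal{G}f|}(u)\bigr|\le|(\log G)'(\overline u)|+\pi|u|$, valid almost everywhere. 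Writing $\Psi(u):=|(\log G)'(\overline u)|$, the measure-preserving change of variables $v=\overline u$ shows $\|\Psi\|_{L^t(B_r)}=\|(\log G)'\|_{L^t(B_r)}$ for every $t>0$ and $r>0$.

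\textbf{Absorbing into the weight.} Set $w(u):=(1+|u|)^{2d+2}$ and define $s$ by $1/s=1/p-1/q$. The hypotheses on $p$ and $q$ are exactly what guarantee $1< s<2d/(2d-1)=1+1/(2d-1)$, so Theorem \ref{thm:main} applies \emph{with exponent $s$} and gives $\|\Psi\|_{L^s(B_r)}=\|(\log G)'\|_{L^s(B_r)}\lesssim_{d,s}r^{2d+1}$. I would then bound $\|\Psi/w\|_{L^s(\Cd)}$ by a dyadic decomposition: on the annulus $B_{2^{j+1}}\setminus B_{2^j}$ one has $w\gtrsim 2^{j(2d+2)}$ while $\int\Psi^s\le\|\Psi\|_{L^s(B_{2^{j+1}})}^s\lesssim 2^{js(2d+1)}$, so the annular contribution is $\lesssim 2^{-js}$ and the resulting geometric series converges. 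The term $\pi|u|/w$ is likewise in $L^s(\Cd)$, since it decays like $|u|^{-(2d+1)}$ at infinity and $s>1>2d/(2d+1)$. Hence $c:=\|(\Psi+\pi|\cdot|)/w\|_{L^s(\Cd)}<\infty$ depends only on $d,s$, i.e. on $d,p,q$, and Hölder's inequality finishes the proof:
\[
\Bigl\|\tfrac{\nabla|\mathcal{G}f|}{|\mathcal{G}f|}\,H\Bigr\|_{L^p(\Omega)}\le\Bigl\|\tfrac{\Psi+\pi|\cdot|}{w}\,(wH)\Bigr\|_{L^p(\Omega)}\le c\,\|wH\|_{L^q(\Omega)}=c\,\|(1+|\cdot|)^{2d+2}H\|_{L^q(\Omega)}.
\]
The main obstacle is the exponent bookkeeping combined with the centering: one must recognize that Theorem \ref{thm:main} has to be invoked at the intermediate Hölder exponent $s$ rather than at $p$, and verify that the precise assumption $q>p/(1-p\frac{2d-1}{2d})$ keeps $s$ inside the admissible range $[1,2d/(2d-1))$; the dyadic summation and the $L^s$-integrability of $|u|/w$ are then routine. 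Crucially, the $f$-independence of $c$ rests entirely on the $f$-independence of the constant in Theorem \ref{thm:main}.
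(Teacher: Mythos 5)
Your proof is correct, and its skeleton coincides with the paper's: reduction to $z_0=0$ by a time-frequency shift, passage to the entire function $G(z)=\mathcal{G}f(\bar z)\eta(z)$ of Lemma \ref{lem:gaborentire}, membership of $G$ in a class $\mathcal{O}_\alpha^2(\Cd)$ because $|\mathcal{G}f|$ peaks at the origin (the sharp value is $\alpha=\pi/2$, since $|\eta(z)|=e^{\pi|z|^2/2}$; your $\alpha=\pi$ is still valid and only worsens constants), the pointwise bound $|\nabla\log|\mathcal{G}f||\le|(\log G)'|+\pi|z|$ via Lemma \ref{lem:keylemma}, and the application of Theorem \ref{thm:main} at the intermediate exponent $s$ defined by $1/p=1/q+1/s$, together with the verification that the hypotheses on $p,q$ force $1<s<1+1/(2d-1)$ --- this is exactly the paper's Part 1. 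Where you genuinely differ is the absorption step. The paper expands $\|\nabla\log|\mathcal{G}f|\cdot H\|_{L^p}^p$ over dyadic annuli $D_j$, applies H\"older with exponents $(s,q)$ on each $D_j$ separately, and then needs a second H\"older inequality, for sums with exponents $(q/p,(q/p)')$, to reassemble the weighted $L^q$ norm of $H$. You instead factor the weight out of the integrand, writing it (up to modulus) as $\frac{\Psi+\pi|\cdot|}{w}\cdot(wH)$, and apply a single global H\"older inequality; the dyadic decomposition is used only to prove the $H$-independent estimate $\|(\Psi+\pi|\cdot|)/w\|_{L^s(\Cd)}\lesssim_{d,s}1$, the annular contributions being $\lesssim 2^{-js}$ by the same growth bound $\lesssim r^{2d+1}$ that the paper uses. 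Your organization buys three small things: the constant $c$ is exhibited as an explicit $f$-independent $L^s$ norm, which makes the uniformity in $f$ transparent; the H\"older-for-sums step (and its exponent bookkeeping $2dp+p+1/r'\le(2d+2)p$) disappears entirely; and the argument runs on a general domain $\Omega$ directly, without first reducing to $\Omega=\R^{2d}$ by extending $H$ by zero.
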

\begin{proof}
 We assume w.l.o.g. that $\Omega =\R^{2d}$ and that $z_0=0$(otherwise translate and modulate $f$).
 The proof is split up into two parts: 
 First we derive uniform bounds of the norm of the logarithmic derivative on balls centered at the origin.
 In the second part the logarithmic derivative is absorbed in a polynomial weight by making use of Hölder's inequality and Part 1.
 \\
 {\bf Part 1:}
 By Lemma \ref{lem:gaborentire} we know that 
 $G(z):=\mathcal{G}f(\bar{z}) \eta(z)$
 is entire.
 The gradient of the modulus of the Gabor transform can be computed in terms of $G$:
 \begin{equation*}
 \nabla |\mathcal{G}f| = \nabla \left( |G| e^{-\frac{\pi}2 |\cdot|^2}\right) = (\nabla|G|) e^{-\frac{\pi}2|\cdot|^2} + |G|(\nabla e^{-\frac{\pi}2|\cdot|^2})
 \end{equation*}
 Since $|\nabla e^{-\frac{\pi}2|\cdot|^2}|(z)=\pi |z|$ we obtain 
 \begin{equation}
   |\nabla \log|\mathcal{G}f|(z)| \le |\nabla \log|G| (z)|+\pi |z|.
 \end{equation}
Lemma \ref{lem:keylemma} implies that the right hand side coincides with $2^{-1/2}|(\log G)'(z)| + \pi |z|$ almost everywhere.\\
 The assumption that $|\mathcal{G}f|$ has a maximum at the origin implies that $G\in \mathcal{O}_{\pi/2}^2(\C^d)$, see Definition \ref{def:Oab}.
 We can therefore apply Theorem \ref{thm:main} to obtain for $r>0$ and $1\le s<1+ 1/(2d-1)$ that 
 \begin{equation}\label{est:logderivspectrogram}
   \|\nabla \log |\mathcal{G}f|\|_{L^s(B_r)} \le 2^{-1/2} \|(\log G)'\|_{L^s(B_r)} + \pi\|z\mapsto z\|_{L^s(B_r)} \lesssim r^{2d+1},
 \end{equation}
where the implicit constant depends on $d$ and $s$ only.\\
{\bf Part 2:}
We define $s$ by the equation $1/p=1/q+1/s$. One can elementary verify that the assumptions on $p$ and $q$ imply that $1\le s < 1+1/(2d-1)$.
Thus the $L^s$-norm of the logarithmic derivative can be bounded as in Part 1, see Equation \eqref{est:logderivspectrogram}.\\
Let $D_0:=B_1$ and $D_j:=B_{2^j}\setminus B_{2^{j-1}}$ for $j\in \mathbb{N}$.
Then we have that
$$
\|\nabla \log|\mathcal{G}f|\cdot H\|_{L^p(\R^{2d})}^p = \sum_{j\ge 0} \|\nabla \log|\mathcal{G}f|\cdot H\|_{L^p(D_j)}^p.
$$
We apply now Hölder's inequality on every $D_j$ to obtain
\begin{equation*}
  \|\nabla \log|\mathcal{G}f| \cdot H\|_{L^p(D_j)} \le \|\nabla \log |\mathcal{G}f|\|_{L^s(D_j)} \|H\|_{L^q(D_j)}
						  \lesssim 2^{j(2d+1)} \|H\|_{L^q(D_j)},
\end{equation*}
where we used Estimate \eqref{est:logderivspectrogram} from Part 1.
Let $r:=q/p>1$ and $r'$ its Hölder conjugate, i.e., $1/r+1/r'=1$.
By applying Hölder's inequality for sums we estimate further
\begin{equation*}
 \begin{aligned}
  \|\nabla \log|\mathcal{G}f| \cdot H\|_{L^p(\R^{2d})}^p &\lesssim \sum_{j \ge 0} 2^{j(2d+1)p} \|H\|_{L^q(D_j)}^p\\
							&= \sum_{j\ge 0} 2^{-j/r'} \cdot 2^{j(2dp+p+1/r')} \|H\|_{L^q(D_j)}^p\\
							&\le \left( \sum_{j\ge 0} 2^{-j}\right)^{1/r'} \cdot \left(\sum_{j\ge 0} 2^{j(2dp+p+1/r')r} \|H\|_{L^q(D_j)}^q \right)^{p/q}.
 \end{aligned}
\end{equation*}
The first factor is a finite constant depending on $r'$ and therefore ultimately on $p$ and $q$ only.\\
The second factor is estimated in the following way:
\begin{equation*}
 \begin{aligned}
  \sum_{j\ge 0} 2^{j(2dp+p+1/r')r} \|H\|_{L^q(D_j)}^q &\le \sum_{j\ge 0} \int_{D_j} 2^{j(2d+2)pr} |H(z)|^q ~dA(z)\\
						      &=  \sum_{j\ge 0} \int_{D_j} 2^{j(2d+2)q} |H(z)|^q ~dA(z)\\
						      &\lesssim \|(1+|\cdot|^{2d+2}) H\|_{L^q(\R^{2d})}^q,
 \end{aligned}
\end{equation*}
where we used that for $z\in D_j$ it holds that $2^{j(2d+2)}\lesssim 1+|z|^{2d+2}$.
Thus we get the desired estimate
\begin{equation*}
 \|\nabla \log|\mathcal{G}f| \cdot H\|_{L^p(\R^{2d})} \lesssim \|(1+|\cdot|^{2d+2}) H\|_{L^q(\R^{2d})}.
\end{equation*}
\end{proof}
The main stability result now follows directly from Proposition \ref{prop:absorblogderiv} together with Corollary \ref{cor:firststabgabor}.
\begin{theorem}\label{thm:gaborstability}
 Let $\Omega \subset \R^{2d}$, let $1\le p < 1+1/(2d-1)$ and $q>p/(1-p\frac{2d-1}{2d})$.
 Then for all $f\in \mathcal{M}^p(\R^{2d})$ whose spectrogram $|\mathcal{G}f|$ has a global maximum at $z_0$ it holds that 
 \begin{multline*}
  \inf_{|a|=1}\|\mathcal{G}g-a\mathcal{G}f\|_{L^p(\Omega)} \lesssim 
 (1+h(|\mathcal{G}f|^p,\Omega)^{-1})\cdot\\ 
  \left(\||\mathcal{G}f|-|\mathcal{G}g|\|_{W^{1,p}(\Omega)} + \|(1+|\cdot-z_0|^{2d+2}) \left(|\mathcal{G}f|-|\mathcal{G}g|\right)\|_{L^q(\Omega)}\right), \quad \forall g\in \mathcal{M}^p(\R^d),
 \end{multline*}
where the implicit constant depends on $d,p$ and $q$ only.
\end{theorem}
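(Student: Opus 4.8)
The strategy is to obtain the estimate by directly chaining Corollary~\ref{cor:firststabgabor} with Proposition~\ref{prop:absorblogderiv}; the substantive content of both ingredients is already in place, so the task reduces to checking that the parameter ranges match and to repackaging the resulting terms into the norms appearing on the right-hand side. First I would observe that the constraint $1\le p<1+1/(2d-1)$ coincides with $1\le p<2d/(2d-1)$, since $1+1/(2d-1)=2d/(2d-1)$; hence the hypotheses on $p$ and $q$ are precisely those demanded by Proposition~\ref{prop:absorblogderiv}. Moreover, modulation spaces are nested, $\mathcal{M}^p(\R^d)\subseteq\mathcal{M}^\infty(\R^d)$ for every finite $p$, so $f\in\mathcal{M}^p$ together with the assumption that $|\mathcal{G}f|$ attains a global maximum at $z_0$ places $f$ in the setting of Proposition~\ref{prop:absorblogderiv}.

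Next I would apply Corollary~\ref{cor:firststabgabor}, which bounds $\inf_{|a|=1}\|\mathcal{G}g-a\mathcal{G}f\|_{L^p(\Omega)}$ by $\||\mathcal{G}g|-|\mathcal{G}f|\|_{L^p(\Omega)}$ plus $2^{9/2}h(|\mathcal{G}f|^p,\Omega)^{-1}$ times $\|\nabla|\mathcal{G}f|-\nabla|\mathcal{G}g|\|_{L^p(\Omega)}+\bigl\|\tfrac{\nabla|\mathcal{G}f|}{|\mathcal{G}f|}(|\mathcal{G}f|-|\mathcal{G}g|)\bigr\|_{L^p(\Omega)}$. The key step is to apply Proposition~\ref{prop:absorblogderiv} with $H:=|\mathcal{G}f|-|\mathcal{G}g|$ to the final logarithmic-derivative term, converting it into $c\,\|(1+|\cdot-z_0|)^{2d+2}(|\mathcal{G}f|-|\mathcal{G}g|)\|_{L^q(\Omega)}$; using the elementary bound $(1+t)^{2d+2}\lesssim 1+t^{2d+2}$ this matches the weighted $L^q$ term in the statement. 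The two remaining summands $\||\mathcal{G}g|-|\mathcal{G}f|\|_{L^p(\Omega)}$ and $\|\nabla|\mathcal{G}f|-\nabla|\mathcal{G}g|\|_{L^p(\Omega)}$ together constitute the Sobolev norm $\||\mathcal{G}f|-|\mathcal{G}g|\|_{W^{1,p}(\Omega)}$.

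Finally I would unify the differing prefactors into the single factor $1+h(|\mathcal{G}f|^p,\Omega)^{-1}$: the first summand carries no Cheeger factor and is dominated by $1\le 1+h^{-1}$, while the remaining summands carry $h^{-1}\le 1+h^{-1}$, so pulling $(1+h^{-1})$ in front and absorbing the numerical and dimensional constants (namely $2^{9/2}$, the constant $c$ from the proposition, and the implicit constant in $(1+t)^{2d+2}\lesssim 1+t^{2d+2}$) into the implicit constant yields the claim. I do not anticipate a genuine obstacle, as the result is an assembly of the two earlier estimates; the only points that warrant care are the exact matching of the admissible $(p,q)$-ranges and invoking the embedding $\mathcal{M}^p\hookrightarrow\mathcal{M}^\infty$ that licenses Proposition~\ref{prop:absorblogderiv}.
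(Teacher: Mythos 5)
Your proposal is correct and follows exactly the paper's own route: the paper proves Theorem~\ref{thm:gaborstability} precisely by chaining Corollary~\ref{cor:firststabgabor} with Proposition~\ref{prop:absorblogderiv} applied to $H=|\mathcal{G}f|-|\mathcal{G}g|$, and your bookkeeping (the identity $1+1/(2d-1)=2d/(2d-1)$, the embedding $\mathcal{M}^p\hookrightarrow\mathcal{M}^\infty$, the bound $(1+t)^{2d+2}\lesssim 1+t^{2d+2}$, and absorbing constants into $1+h(|\mathcal{G}f|^p,\Omega)^{-1}$) is exactly what the paper's terse ``follows directly'' leaves implicit.
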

\subsection{Noise stability}\label{subsec:noisystab}
In virtually any practical situation the measurements are corrupted by noise, i.e., one is faced with the problem of reconstructing $f$ from $|\mathcal{G}f|+\gamma$ instead of $|\mathcal{G}f|$.
As we will see next, the main theorem of the previous section also implies a stability result for reconstruction from noisy Gabor magnitudes.
\begin{theorem}
 Let $\Omega\subset \R^{2d}$, let $1\le p < 1+1/(2d-1)$ and $q>p/(1-p\frac{2d-1}{2d})$.
 Suppose that $f\in\mathcal{M}^p(\R^{2d})$ is such that its spectrogram $|\mathcal{G}f|$ has a global maximum at $z_0$. 
 Suppose that $\gamma$ is a smooth function on $\Omega$ and suppose that $g\in \mathcal{M}^p(\R^d)$ is such that
 $$
 \||\mathcal{G}f|+\gamma-|\mathcal{G}g|\|_{\mathcal{D}}\le \epsilon
 $$
 where
 $$
 \|F\|_{\mathcal{D}}:= \|F\|_{W^{1,p}(\Omega)} + \|(1+|\cdot-z_0|^{2d+2}) F\|_{L^q(\Omega)}.
 $$
 Then it holds that
 $$
 \inf_{|a|=1} \|\mathcal{G}g-a\mathcal{G}f\|_{L^p(\Omega)} \lesssim (1+h(|\mathcal{G}f|^p,\Omega)^{-1}) (\epsilon + \|\gamma\|_{\mathcal{D}}),
 $$
 where the implicit constant depends on $d,p$ and $q$ only.
\end{theorem}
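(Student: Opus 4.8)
The plan is to recognize that this result is an immediate consequence of Theorem \ref{thm:gaborstability} combined with the triangle inequality, once one notices that the norm $\|\cdot\|_{\mathcal{D}}$ is precisely the quantity controlling the right-hand side of Theorem \ref{thm:gaborstability}. Indeed, the expression
$$
\||\mathcal{G}f|-|\mathcal{G}g|\|_{W^{1,p}(\Omega)} + \|(1+|\cdot-z_0|^{2d+2})(|\mathcal{G}f|-|\mathcal{G}g|)\|_{L^q(\Omega)}
$$
appearing in Theorem \ref{thm:gaborstability} is by definition nothing but $\||\mathcal{G}f|-|\mathcal{G}g|\|_{\mathcal{D}}$. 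Since the hypotheses on $f$, $g$, $p$ and $q$ are identical to those of Theorem \ref{thm:gaborstability}, I would apply that theorem directly to obtain
$$
\inf_{|a|=1}\|\mathcal{G}g-a\mathcal{G}f\|_{L^p(\Omega)} \lesssim (1+h(|\mathcal{G}f|^p,\Omega)^{-1})\,\||\mathcal{G}f|-|\mathcal{G}g|\|_{\mathcal{D}},
$$
with implicit constant depending only on $d$, $p$ and $q$.

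The second and final step is to absorb the noise term via the triangle inequality. Observe that $\|\cdot\|_{\mathcal{D}}$ is a genuine norm, being the sum of the Sobolev norm $\|\cdot\|_{W^{1,p}(\Omega)}$ and the weighted Lebesgue norm $F\mapsto\|(1+|\cdot-z_0|^{2d+2})F\|_{L^q(\Omega)}$, the latter being a norm because its weight is bounded below by $1$. Writing $|\mathcal{G}f|-|\mathcal{G}g| = (|\mathcal{G}f|+\gamma-|\mathcal{G}g|)-\gamma$, I would then estimate
$$
\||\mathcal{G}f|-|\mathcal{G}g|\|_{\mathcal{D}} \le \||\mathcal{G}f|+\gamma-|\mathcal{G}g|\|_{\mathcal{D}} + \|\gamma\|_{\mathcal{D}} \le \epsilon+\|\gamma\|_{\mathcal{D}},
$$
where the final inequality is just the standing assumption $\||\mathcal{G}f|+\gamma-|\mathcal{G}g|\|_{\mathcal{D}}\le\epsilon$. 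Inserting this into the bound from the first step gives the claimed inequality at once.

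There is essentially no substantive obstacle here. The only points requiring minor care are verifying that $\|\cdot\|_{\mathcal{D}}$ satisfies the triangle inequality -- immediate, since it is a sum of two seminorms each of which does -- and noting that the smoothness of $\gamma$ together with $g\in\mathcal{M}^p(\R^d)$ guarantees that the $\mathcal{D}$-norms in question are meaningful (and if $\|\gamma\|_{\mathcal{D}}=+\infty$ the asserted estimate holds trivially). The genuine mathematical content resides entirely in Theorem \ref{thm:gaborstability}; the present statement merely recasts it into a form directly applicable to additively perturbed, i.e.\ noisy, spectrogram measurements.
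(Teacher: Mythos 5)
Your proof is correct and matches the paper's own argument exactly: both apply Theorem \ref{thm:gaborstability} and then absorb the noise term $\gamma$ via the triangle inequality for $\|\cdot\|_{\mathcal{D}}$. Your additional remarks on why $\|\cdot\|_{\mathcal{D}}$ is a norm and the degenerate case $\|\gamma\|_{\mathcal{D}}=+\infty$ are fine but not needed beyond what the paper states.
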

\begin{proof}
 By Theorem \ref{thm:gaborstability} we have that
 $$
 \inf_{|a|=1} \|\mathcal{G}g-a\mathcal{G}f\|_{L^p(\Omega)} \lesssim (1+h(|F_1|^p,\Omega)^{-1}) \||\mathcal{G}g|-|\mathcal{G}f|\|_{\mathcal{D}}.
 $$
 The statement then follows from the estimate
 $$
 \||\mathcal{G}g|-|\mathcal{G}f|\|_{\mathcal{D}} \le \||\mathcal{G}g|-(|\mathcal{G}f|+\gamma)\|_{\mathcal{D}} + \|\gamma\|_{\mathcal{D}} \le \epsilon + \|\gamma\|_{\mathcal{D}}.
 $$
\end{proof}
\subsection{Multicomponent stability}\label{subsec:multicompstab}
In this section we discuss yet another consequence of the main stability result, Theorem \ref{thm:gaborstability}, 
which tells us that instabilities for Gabor phase retrieval must be of disconnected type.
In other words reconstruction of the Gabor transform is stable on domains $\Omega$ where $|\mathcal{G}f|$ is connected.\\
We now want to pick up on the multicomponent paradigm, which was introduced in earlier work by one of the authors and his collaborators\cite{alaifari2016stable}:
Suppose that the phase retrieval problem is relaxed as we require no longer that $\mathcal{G}f$ is to be reconstructed up to a global phase factor 
but instead only demand that the phase factor is constant on each component but may take different values on different components.
A component is here a subdomain $\Omega_i$ of $\Omega$ on which $|\mathcal{G}f|$ is connected, i.e., stable recovery on $\Omega_i$ is possible.\\
The multicomponent paradigm -- i.e. to identify $F=\sum_{i=1}^k F_i$, where $F_i$ is concentrated on $\Omega_i$ with $\sum_{i=1}^k a_i F_i$ whenever $|a_1|,\ldots, |a_k|=1$ --
is especially meaningful for applications in audio as a change of phase on individual components is usually imperceptible for the human ear.\\
The relaxation accomplishes that Gabor phase retrieval becomes stable.
By applying Theorem \ref{thm:gaborstability} on every single component $\Omega_i\subset \Omega$ we obtain the following result. 
\begin{theorem}\label{thm:multicompstab}
 Let $\Omega\subset \R^{2d}$, let $1\le p < 1+1/(2d-1)$ and $q>p/(1-p\frac{2d-1}{2d})$.
 Suppose that $f\in\mathcal{M}^p(\R^{d})$ is such that its spectrogram $|\mathcal{G}f|$ has a global maximum at $z_0$. 
 Suppose that $\Omega$ is partitioned into subdomains $\Omega_1,\Omega_2,\ldots,\Omega_k$, i.e.,
 $\Omega_i\cap \Omega_j = \emptyset$, $\forall i\neq j$ and $\bigcup_{i=1}^k \overline{\Omega_i} = \overline{\Omega}$.\\
 Then it holds for all $g\in\mathcal{M}^p(\R^{d})$ that 
 \begin{multline*}
 \inf_{|a_1|,\ldots,|a_k|=1} \sum_{i=1}^k \|\mathcal{G}g-a_i \mathcal{G}f\|_{L^p(\Omega_i)} \\
 \lesssim (1+h^*) \left(\||\mathcal{G}f|-|\mathcal{G}g|\|_{W^{1,p}(\Omega)} + \|(1+|\cdot-z_0|^{2d+2}) \left(|\mathcal{G}f|-|\mathcal{G}g|\right)\|_{L^q(\Omega)}\right),
 \end{multline*}
 where $h^*:=\max_{1\le i \le k} h(|\mathcal{G}f|^p,\Omega_i)^{-1}$ and where the implicit constant depends on $d,p$ and $q$ only.
\end{theorem}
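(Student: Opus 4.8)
The plan is to deduce Theorem~\ref{thm:multicompstab} from Theorem~\ref{thm:gaborstability} by applying the latter on each component $\Omega_i$ separately and then reassembling the estimates over the partition; throughout I would write $F := |\mathcal{G}f| - |\mathcal{G}g|$ and $w := 1 + |\cdot - z_0|^{2d+2}$. The first observation is that the infimum on the left-hand side decouples across the components: the constraint set $\{(a_1,\dots,a_k): |a_1|=\dots=|a_k|=1\}$ is a product, and in the objective $\sum_{i=1}^k \|\mathcal{G}g - a_i\mathcal{G}f\|_{L^p(\Omega_i)}$ the $i$-th summand depends on $a_i$ only, so that minimising the sum coincides with minimising each term on its own,
\[
\inf_{|a_1|,\dots,|a_k|=1}\ \sum_{i=1}^k \|\mathcal{G}g - a_i\mathcal{G}f\|_{L^p(\Omega_i)} \;=\; \sum_{i=1}^k\ \inf_{|a_i|=1}\|\mathcal{G}g - a_i\mathcal{G}f\|_{L^p(\Omega_i)}.
\]

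Next I would apply Theorem~\ref{thm:gaborstability} with $\Omega$ replaced by $\Omega_i$. The hypothesis that $|\mathcal{G}f|$ has a global maximum at $z_0$ refers to $f$ on all of $\R^{2d}$ and is therefore inherited by every sub-domain, so the theorem applies to each $\Omega_i$ verbatim and yields, with an implicit constant depending only on $d,p,q$,
\[
\inf_{|a_i|=1}\|\mathcal{G}g - a_i\mathcal{G}f\|_{L^p(\Omega_i)} \lesssim \bigl(1 + h(|\mathcal{G}f|^p,\Omega_i)^{-1}\bigr)\bigl( \|F\|_{W^{1,p}(\Omega_i)} + \|wF\|_{L^q(\Omega_i)} \bigr).
\]
Since $h^* = \max_{1\le i\le k} h(|\mathcal{G}f|^p,\Omega_i)^{-1}$ dominates each individual Cheeger reciprocal, every prefactor $1 + h(|\mathcal{G}f|^p,\Omega_i)^{-1}$ is at most $1 + h^*$, which I would then pull out of the sum over $i$.

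It remains to pass from the component sums to the single norms over $\Omega$ appearing in the statement. As the $\Omega_i$ are pairwise disjoint with $\bigcup_i\overline{\Omega_i}=\overline{\Omega}$, their mutual boundaries are Lebesgue-null and the $p$-th (resp.\ $q$-th) powers of the relevant norms are additive, e.g.\ $\sum_i \|F\|_{L^p(\Omega_i)}^p = \|F\|_{L^p(\Omega)}^p$, and likewise for $\nabla F$ and for $wF$ in $L^q$. The only point that is not pure bookkeeping is that Theorem~\ref{thm:gaborstability} supplies \emph{sums} of norms over the pieces, whereas the target is phrased with single norms over $\Omega$; converting $\sum_i \|\cdot\|_{L^p(\Omega_i)}$ into $\|\cdot\|_{L^p(\Omega)}$ is an identity when $p=1$ (integrals simply add) and, for $p>1$, follows from Hölder's inequality for the finite sum in the form $\sum_{i=1}^k c_i \le k^{1-1/p}\bigl(\sum_i c_i^p\bigr)^{1/p}$. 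I expect this recombination to be the main -- indeed essentially the only -- step requiring care, since it is precisely where one checks that the constant stays controlled (cleanly, and with no dependence on the number of components, at $p=1$); the remaining ingredients are the separability of the infimum and a direct appeal to Theorem~\ref{thm:gaborstability}.
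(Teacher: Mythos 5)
Your route is the same as the paper's --- the paper's entire proof is the sentence preceding the theorem, ``by applying Theorem \ref{thm:gaborstability} on every single component $\Omega_i\subset\Omega$'', and your decoupling of the infimum over the product constraint set, the componentwise application of Theorem \ref{thm:gaborstability}, and the bound $1+h(|\mathcal{G}f|^p,\Omega_i)^{-1}\le 1+h^*$ are precisely how that sentence must be implemented. The gap lies in the recombination step that you yourself single out, and it is worse than your caveat for $p>1$ suggests. Writing $F=|\mathcal{G}f|-|\mathcal{G}g|$ and $w=1+|\cdot-z_0|^{2d+2}$, summing the componentwise estimates leaves you with $\sum_{i=1}^k\|wF\|_{L^q(\Omega_i)}$, which must be dominated by $\|wF\|_{L^q(\Omega)}$. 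The hypotheses force $q>p\ge 1$ (indeed $q>2d$), so this is a sum of $L^q$-norms with $q>1$ \emph{regardless of the value of $p$}, and the best generic bound is H\"older's, $\sum_{i=1}^k\|wF\|_{L^q(\Omega_i)}\le k^{1-1/q}\|wF\|_{L^q(\Omega)}$, which is attained when the $L^q$-mass of $wF$ is equidistributed among the $\Omega_i$. Hence even at $p=1$, where the $W^{1,1}$-terms do add exactly as you say, the weighted term already costs a factor $k^{1-1/q}$; your parenthetical claim that the constant is clean and independent of the number of components at $p=1$ is therefore false. For $p>1$ you pay in addition the factor $k^{1-1/p}$ on the Sobolev terms, which you did identify.

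Consequently the argument you give (equally, the argument the paper gestures at) proves the stated inequality only with an implicit constant of order $k^{1-1/q}$ (which dominates $k^{1-1/p}$ since $q>p$), not with a constant depending on $d$, $p$ and $q$ only, as the theorem asserts. This cannot be repaired within the method: with a sum over components on the left-hand side, every local estimate must be invoked at full strength simultaneously, and the H\"older loss over the pieces is sharp. Two honest fixes are available: either weaken the left-hand side to $\max_{1\le i\le k}\inf_{|a_i|=1}\|\mathcal{G}g-a_i\mathcal{G}f\|_{L^p(\Omega_i)}$, for which each local right-hand side is trivially dominated by the global one and the constant is genuinely $k$-independent, or keep the sum and let the implicit constant depend on $k$ as well (a factor $k^{1-1/q}$ suffices). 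Obtaining the theorem exactly as stated would require an ingredient beyond componentwise application of Theorem \ref{thm:gaborstability} --- for instance, exploiting that $k$ distinct components of $|\mathcal{G}f|$ must be well separated in the time-frequency plane, so that the weight $w$ itself grows with $k$ --- and neither your proposal nor the paper supplies such an argument.
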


\section*{Acknowledgements} PG would like to thank Wilhelm Schlag for inspiring discussions and for suggesting the problem of extending the results of \cite{stablegaborpr} to the multivariate case. This research has been supported by the Austrian Science Fund (FWF), grant P-30148. 
\bibliographystyle{abbrv}
\bibliography{phasecluster_CPAM}
\end{document}